\documentclass[12pt]{amsart}

\usepackage{amsrefs}

\usepackage{graphicx}
\usepackage{amsfonts}
\usepackage{amssymb}
\usepackage{amsmath}
\usepackage{amscd}
\usepackage{amsthm}
\usepackage{tikz}
\usepackage{tikz-cd}
\usepackage{hyperref}
\usepackage[all]{xy}
\usetikzlibrary{matrix}
\usepackage{pinlabel}
\usepackage{color}
\usepackage[margin=3cm, marginpar=2.5cm]{geometry}

\newtheorem{theorem}{Theorem}[section]
\newtheorem{proposition}[theorem]{Proposition}
\newtheorem{lemma}[theorem]{Lemma}
\newtheorem{corollary}[theorem]{Corollary}

\theoremstyle{remark}
\newtheorem{remark}[theorem]{Remark}
\newtheorem{definition}[theorem]{Definition}

\newcommand{\s}{\mathfrak{s}}
\renewcommand{\t}{\mathfrak{t}}

\newcommand{\Z}{\mathbb{Z}}

\newcommand{\Q}{\mathbb{Q}}
\newcommand{\R}{\mathbb{R}}
\newcommand{\RP}{\mathbb{RP}}

\newcommand{\Spinc}{\text{Spin}^c}

\def\co{\colon\thinspace}
\newcommand{\HFp}{\HF^+}
\newcommand{\HFoo}{\HF^\infty}

\DeclareMathOperator{\rk}{rk}

\DeclareMathOperator{\kerr}{ker}
\DeclareMathOperator{\cokerr}{coker}
\DeclareMathOperator{\HF}{HF}

\DeclareMathOperator{\Char}{Char}




\title{$3$-manifolds that bound no definite $4$-manifold}
\author{Marco Golla}
\address{CNRS, Laboratoire de Math\'ematiques Jean Leray, Nantes, France}
\email{marco.golla@univ-nantes.fr}

\author{Kyle Larson}
\address{Department of Mathematics, University of Georgia, Athens, Georgia U.S.A.}
\email{kyle.larson@uga.edu}

\begin{document}





\begin{abstract}
We produce a rational homology 3-sphere that does not smoothly bound either a positive \emph{or} negative definite 4-manifold.
Such a 3-manifold necessarily cannot be rational homology cobordant to a Seifert fibered space or any 3-manifold obtained by Dehn surgery on a knot. The proof requires an analysis of short characteristic covectors in bimodular lattices.
\end{abstract}

\maketitle


\vspace{-1 \baselineskip}

\begin{section}{Introduction}\label{introduction}
By a definite 4-manifold we mean  a 4-manifold whose intersection form is positive or negative definite.
Many familiar classes of 3-manifolds are known to bound either positive or negative definite 4-manifolds. 
These include, for example, any Seifert fibered rational homology sphere or any 3-manifold obtained by nonzero surgery on a knot in $S^3$.
Lens spaces and those 3-manifolds which are the double covers of $S^3$ branched over an alternating knot are examples of 3-manifolds that bound positive \emph{and} negative definite 4-manifolds.

However, one can often argue that a given 3-manifold cannot smoothly bound \emph{both} positive and negative definite 4-manifolds (all manifolds in this paper are assumed to be smooth and oriented).
For example, consider the Poincar\'e homology sphere $P$, oriented as the link of the Brieskorn singularity.
Since its Fr{\o}yshov $h$-invariant (or equivalently, the Heegaard Floer correction term $d$) is positive, it cannot bound a positive definite 4-manifold (see \cite[Theorem 3]{Froyshov1} or \cite[Corollary 9.8]{OSz}).
On the other hand, $P$ does bound negative definite 4-manifolds, e.g., either as $-1$-surgery on the left-handed trefoil or as the boundary of the negative $E_8$-plumbing.

To obstruct a 3-manifold from bounding \emph{either} positive or negative definite 4-manifolds is rather more difficult.
Fr{\o}yshov has announced the first examples of integral homology spheres that bound no definite 4-manifolds, provided that the first homology of the putative definite 4-manifold does not contain 2-torsion \cite{Froyshov2}.
His argument depends on the fact that the $h$-invariant and the $q_2$-invariant arising from instanton homology are linearly independent.
%
Recent work of Nozaki, Sato, and Taniguchi~\cite{NST}, using filtered instanton Floer homology, gives examples of integral homology spheres that bound no definite 4-manifolds, without any restrictions on the torsion in homology.
Any \emph{rational} homology sphere rationally cobordant to one of the examples of~\cite{NST} evidently has the same property.

The main result of this paper is the following.

\begin{theorem}\label{main}
There exist rational homology spheres that are not cobordant to an integral homology sphere and that bound no definite $4$-manifold.
\end{theorem}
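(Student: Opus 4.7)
My plan is to produce an explicit rational homology sphere $Y$ with two features: a non-metabolic torsion linking form, which obstructs rational cobordism to any integer homology sphere, and a distribution of Heegaard Floer correction terms incompatible with any definite filling. The non-cobordism part uses the classical fact that a rational cobordism from $Y$ to an integer homology sphere $\Sigma$ forces the torsion linking forms of $Y$ and $\Sigma$ to be Witt equivalent; since the linking form of $\Sigma$ is trivial, it suffices to exhibit $Y$ with non-trivial Witt class. A natural candidate is a connected sum $Y = Y_0 \# Y_1$, where $Y_0$ is a Nozaki--Sato--Taniguchi integer homology sphere (already known to bound no definite $4$-manifold), and $Y_1$ is a rational homology sphere supplying a non-metabolic linking form (for example, a lens space $L(p, q)$ with $p$ an odd prime, whose linking form has no isotropic subgroup). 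The linking form of a connected sum is the orthogonal sum of the summands' forms, so that of $Y$ agrees with that of $Y_1$.

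For the bounding obstruction, suppose $Y$ bounds a negative definite $4$-manifold $W$, which after handle slides I may take to have $b_1(W) = 0$. By the Ozsv\'ath--Szab\'o correction term inequality
$$
c_1(\mathfrak{t})^2 + b_2(W) \leq 4\, d(Y, \mathfrak{t}|_Y), \quad \mathfrak{t} \in \Spinc(W),
$$
the positive definite lattice $L = -Q_W$ satisfies: for every $\mathfrak{s} \in \Spinc(Y)$, there is a characteristic covector of $L$ lying in the coset corresponding to $\mathfrak{s}$ whose squared norm is at most $-b_2(W) - 4 d(Y, \mathfrak{s})$. The discriminant group $L^*/L$, together with its linking form, is isomorphic to $H_1(Y)$ with its linking form. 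Via the connected sum formula $d(Y, \mathfrak{s}_0 \# \mathfrak{s}_1) = d(Y_0, \mathfrak{s}_0) + d(Y_1, \mathfrak{s}_1)$, I can compute the $d$-invariants of $Y$ and convert them into explicit numerical constraints on $L$. Applying the same argument to $-Y$ rules out positive definite fillings.

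The main obstacle is the lattice-theoretic step: ruling out the existence of a positive definite lattice of the required rank whose discriminant form is the linking form of $Y_1$ and whose characteristic covectors in each coset achieve the norms forced by the $d$-invariants. My plan is first to apply an Elkies-style short-characteristic-vector bound (in its refined form for non-unimodular lattices) to peel off any $\langle 1 \rangle$ summands, reducing to a small list of indecomposable bimodular lattices of controlled rank. Then, by a direct case analysis I would check that no candidate simultaneously satisfies all the numerical constraints; a key mechanism I expect to exploit is that the rigid shift of $d$-invariants contributed by the $Y_0$ summand interacts poorly with the characteristic covector distribution of a bimodular lattice, producing a contradiction in every case. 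Much of the technical weight of the paper is likely concentrated in this enumeration, and the finer properties of the linking form of $Y_1$ will need to be chosen carefully to keep the case analysis tractable.
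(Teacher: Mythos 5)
Your overall architecture is correct in broad outline — use a linking-form obstruction to rule out cobordism to an integer homology sphere, and an Ozsv\'ath--Szab\'o correction-term plus lattice-theoretic argument to rule out definite fillings — but there are two serious gaps.

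First, using an NST integer homology sphere $Y_0$ as a building block does not help in the way you hope. The NST obstruction comes from filtered instanton Floer theory, and it is precisely \emph{not} visible to the Heegaard Floer $d$-invariant: if it were, the much easier $d$-invariant argument would have preempted their work. Once you pass to $Y_0 \# Y_1$ and invoke only the Ozsv\'ath--Szab\'o inequality, all that survives of $Y_0$ is the additive shift it contributes to the $d$-invariants, which you have no control over and which may vanish. If $d(Y_0) = 0$, the $d$-invariants of $Y_0\#L(p,q)$ coincide with those of $L(p,q)$, which bounds definite manifolds of both signs, and your argument collapses. The summand that supplies the shift has to be chosen by hand so that the correction terms land exactly where the lattice obstruction bites; the paper uses $3P$ (three Poincar\'e spheres, shifting by $+6$) for this reason, and makes no use of NST.

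Second, the phrase ``an Elkies-style short-characteristic-vector bound (in its refined form for non-unimodular lattices)'' papers over the main theorem you would need to prove. The relevant result does not exist in the literature and is the technical heart of the paper: for a positive definite bimodular lattice $L$, the two class-restricted defects satisfy $d_+(L) + d_-(L) \le 0$, with equality iff $L$ is diagonal (in which case $d_\pm(L) = \pm\tfrac14$). Proving this requires embedding $L$ into its ``double'' $D = U_L$ and running a genuinely new argument (a root-lattice / bicolorability analysis) inside $I^{2n}$; it is not a reduction to a finite list of indecomposable bimodular lattices, which would in any case not be finite at unbounded rank. The crucial mechanism for the application is the rigidity in the equality case: the example is arranged so that $d_{1/4}(N)+d_{-1/4}(N) = 0$ (the boundary case) but $d_{\pm1/4}(N) = \mp\tfrac74 \neq \pm\tfrac14$, contradicting the equality characterization; the same holds after reversing orientation. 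Your proposal contains neither the equality-case statement nor any mechanism playing its role. Also, your choice of $Y_1 = L(p,q)$ with $p$ an odd prime lands you in determinant-$p$ lattices, not the bimodular ($\det = 2$) setting where the doubling trick and the new Elkies-type theorem actually work; the paper's restriction to $H_1(Y) \cong \Z/2\Z$ is not incidental.
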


The authors understand that Mathew Hedden has produced similar (unpublished) examples.
As far as we understand, such a result is not currently accessible with the techniques of~\cite{NST}, since filtered instanton Floer homology is only defined for integral homology spheres.

The proof of our theorem instead combines an inequality of Ozsv\'ath and Szab\'o~\cite{OSz} relating correction terms and the squares of first Chern classes of spin$^c$ structures together with an analysis of short characteristic covectors in bimodular lattices, using work of Elkies~\cite{Elkies} on unimodular lattices.

Let us give a specific example: let $N$ denote the 3-manifold $N := 3P \# \overline{Y}$, i.e.~the sum of three copies of the Poincar\'e sphere $P$ and the Seifert fibered space $\overline{Y} = Y(2;\frac{15}{13},\frac{17}{3},\frac{23}{22})$.
(See Figure~\ref{f:seifert}.)
Note that $H_1(N) \cong \Z/2\Z$, so $N$ cannot be homology cobordant to an integral homology sphere. We will prove below that $N$ cannot bound a definite 4-manifold.

One can obtain other examples satisfying various properties by constructing 3-manifolds that are integral or rational homology cobordant to $N$.
For example, work of Livingston~\cite{Livingston} implies that $N$ is integral homology cobordant to an irreducible 3-manifold, which can further be assumed to be hyperbolic by work of Myers~\cite{Myers}.
From what we discussed above we get the following immediate corollaries.

\begin{figure}
\labellist
\pinlabel $2$ at 138 46
\pinlabel $\frac{15}{13}$ at 13 7
\pinlabel $\frac{17}{3}$ at 49 7
\pinlabel $\frac{23}{22}$ at 86 7
\endlabellist
\centering
\includegraphics[scale=1.5]{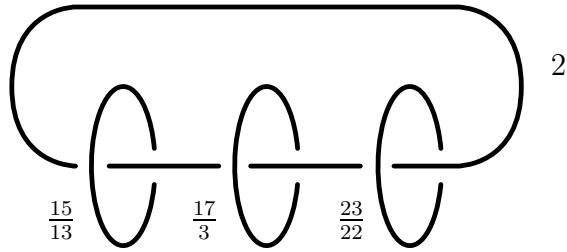}
\caption{A surgery description of $Y(2;\frac{15}{13},\frac{17}{3},\frac{23}{22})$.}
\label{f:seifert}
\end{figure}

\begin{corollary}
$N$ is not rational homology cobordant to any Seifert fibered space.
\end{corollary}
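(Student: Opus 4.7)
The plan is a short contradiction argument on top of Theorem~\ref{main}. Suppose that $N$ is rationally homology cobordant to a Seifert fibered space $S$. Because a rational homology cobordism forces the two boundaries to have isomorphic rational homology and $N$ is a $\Q$-homology sphere, $S$ is itself a Seifert fibered rational homology sphere. I will then manufacture a definite $4$-manifold with boundary $N$, contradicting the theorem.

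As observed in the introduction, every Seifert fibered rational homology sphere bounds a (positive or negative) definite $4$-manifold. I would pick such a $W$ with $\partial W = \pm S$ and, after simultaneously reversing all orientations if necessary, assume $\partial W = S$ and $W$ is positive definite. Let $X$ be a smooth rational homology cobordism with $\partial X = N \sqcup (-S)$, and form $W' = X \cup_S W$, a smooth $4$-manifold with $\partial W' = N$.

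The main step, and essentially the only nontrivial one, is to verify that $W'$ is still positive definite. Since $X$ is a rational homology cobordism between $\Q$-homology spheres, one has $H_2(X;\Q) = 0$ and $H_*(S;\Q) \cong H_*(S^3;\Q)$, so the Mayer--Vietoris sequence for $W' = X \cup_S W$ collapses to an isomorphism $H_2(W';\Q) \cong H_2(W;\Q)$ induced by inclusion. Since intersections of cycles carried by $W$ can be computed inside $W$, the rational intersection form of $W'$ agrees with that of $W$, hence is positive definite; so is the integral intersection form on $H_2(W';\Z)/\mathrm{torsion}$. This contradicts Theorem~\ref{main}. The only technical ingredient is the definiteness check, which is routine once one notes the rational acyclicity of $X$ in degree~$2$; no further input is needed.
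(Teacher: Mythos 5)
Your proof is correct and matches the paper's intended (and essentially unstated) argument: the paper labels this an ``immediate corollary,'' relying on the facts already recalled in the introduction that Seifert fibered rational homology spheres bound definite $4$-manifolds and that bounding a definite $4$-manifold is a rational homology cobordism invariant. Your Mayer--Vietoris verification that the glued manifold $W' = X \cup_S W$ remains definite is the correct way to make this precise, and the orientation-reversal bookkeeping is handled appropriately since Theorem~\ref{main} rules out both signs of definiteness for $N$.
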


Examples of integral homology spheres that are not integral homology cobordant to any Seifert fibered space have been produced by Stoffregen~\cite{Stoffregen} and Fr{\o}yshov~\cite{Froyshov2}; previously, rational homology spheres that are not \emph{integral} homology cobordant to any Seifert fibered space appeared in~\cite{CochranTanner}.

\begin{corollary}
$N$ is not rational homology cobordant to any $3$-manifold obtained by Dehn surgery on a knot in $S^3$.
\end{corollary}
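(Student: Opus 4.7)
The plan is to reduce the corollary directly to Theorem~\ref{main} by a simple cap-off argument. Suppose, toward a contradiction, that $N$ is rational homology cobordant to some Dehn surgery $Y = S^3_{p/q}(K)$ via a cobordism $W$. The first step is to note that, since $N$ is a rational homology sphere, $Y$ must be one as well; in particular $p/q \neq 0$, so 0-surgery (for which $b_1 = 1$) is automatically excluded.

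Next I would invoke the classical fact recalled in the introduction: every 3-manifold obtained by nonzero Dehn surgery on a knot in $S^3$ bounds a definite 4-manifold. Concretely, writing $p/q = [a_1, \ldots, a_k]$ as a continued fraction with entries all of the same sign and taking the corresponding linear plumbing with a 2-handle attached along $K$ gives a 4-manifold $X$ with $\partial X = Y$ whose intersection form is the (definite) framing matrix. Reversing orientations if necessary, I may assume $X$ is positive definite.

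Finally I would glue to form $X' \defeq W \cup_Y X$, a smooth 4-manifold with $\partial X' = N$, and show it inherits positive definiteness. Since $W$ is a rational homology cobordism between rational homology spheres, $H_2(W;\Q) = 0$, while $H_1(Y;\Q) = H_2(Y;\Q) = 0$. The Mayer-Vietoris sequence with rational coefficients then collapses to an isomorphism $H_2(X';\Q) \cong H_2(X;\Q)$ under which the intersection pairings agree, so $X'$ is positive definite. This contradicts Theorem~\ref{main}.

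There is really no obstacle beyond what is packaged into Theorem~\ref{main}; the corollary is essentially a bookkeeping consequence of it. The only thing to confirm carefully is the continued-fraction/plumbing construction producing a definite filling of $S^3_{p/q}(K)$, which is standard.
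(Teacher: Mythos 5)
Your proposal is correct and matches the paper's (implicit) argument: the corollary is stated as an immediate consequence of Theorem~\ref{main} together with the standard facts, recalled in the introduction, that nonzero Dehn surgery on a knot bounds a definite 4-manifold and that this property is preserved by capping off a rational homology cobordism. Your unpacking of the Mayer--Vietoris argument and the continued-fraction plumbing is exactly the expected bookkeeping; the only minor caveat is the ``reversing orientations'' step, where one should keep track that reversing the orientation of $X$ also reverses $\partial X$, but the conclusion (a definite filling of $\pm N$, contradicting Theorem~\ref{main} which rules out both signs) is unaffected.
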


It is often quite difficult to prove that a 3-manifold cannot be obtained by surgery on a knot (see, for example,~\cite{HKL}), and most of the known obstructions are not preserved under rational homology cobordism.
However, in~\cite{HKMP} examples of 3-manifolds $Y$ with $b_1(Y)=1$ are constructed that are not rational homology cobordant to 0-surgery on a knot.

In Section~\ref{example} we will give a more direct proof that the 3-manifold $\overline Y$ is not obtained as Dehn surgery along a knot in $S^3$, and use $\overline Y$ to produce another example of a \emph{spineless} 4-manifold (see~\cite{LevineLidman} and~\cite{HaydenPiccirillo} for previous work on the subject---Hayden and Piccirillo's results in particular are much stronger than ours).

\subsection*{Organization of the paper}
In Section~\ref{lattices} we study bimodular lattices and their short characteristic covectors.
In Section~\ref{obstruction} we describe an obstruction for a rational homology sphere $Y$ with $H_1(Y)\cong \Z/2\Z$ to bound a definite 4-manifold.
In Section~\ref{example}, we show that the manifold $N$ described above satisfies the conditions of the obstruction from Section \ref{obstruction}.

\subsection*{Acknowledgements}
We thank Paolo Aceto for bringing this problem to our attention, and Adam Levine, Brendan Owens, Kim Fr{\o}yshov, Matthew Hedden, and Andr\'as Stipsicz for useful conversations.
MG thanks the R\'enyi Institute for their hospitality at the beginning of this project.

\end{section}

\begin{section}{Characteristic covectors of bimodular lattices}\label{lattices}

In this section, a lattice $\Lambda$ will be a subset $\Lambda \subset \R^m$ isomorphic to $\Z^m$, and such that, with respect to the Euclidean scalar product on $\R^m$ one has $v\cdot w \in \Z$ for each $v,w \in \Lambda$. A lattice is said to be \emph{minimal} if it contains no vector of square $1$.

We denote with $\Lambda'$ the \emph{dual lattice} of $\Lambda$, i.e. the set of vectors in $\R^m$ that pair integrally with each element in $\Lambda$. Note that $\Lambda \subset \Lambda'$, and that $\Lambda'$ is \emph{not} a lattice according the definition above, unless $\Lambda = \Lambda'$. (There are always vectors in $\Lambda'$ that square to a rational if the containment is strict.) If $\Lambda = \Lambda'$ we say that $\Lambda$ is \emph{unimodular} (or \emph{self-dual}). Even if $\Lambda$ and $\Lambda'$ both live in $\R^m$ and we use the scalar product on $\R^n$ for the pairing, we use the notation $\langle \xi, v\rangle$ to denote the pairing of a covector $\xi \in \Lambda'$ and a vector $v \in \Lambda$.

We denote with $\Char(\Lambda)$ the subset of $\Lambda'$ comprising all $\xi$ such that $\langle \xi, v\rangle \equiv v^2 \pmod 2$. Each such $\xi$ is a \emph{characteristic covector}. (Again, when $\Lambda$ is unimodular $\xi$ is actually an element in $\Lambda$, but we prefer to talk about covectors to emphasize that we are thinking about the dual.)

Throughout the section, the letter $L$ will always denote a positive definite integral lattice of rank $n$ and determinant $2$ (a \emph{bimodular lattice}), $A$ will be an auxiliary lattice of determinant $2$, and $M_A$ will be the lattice $L \oplus A$.
Note that $M'_A/M_A \cong L'/L \oplus A'/A \cong \Z/2\Z \oplus \Z/2\Z$ has a unique metabolizer (i.e. a subgroup isomorphic to $\Z/2\Z$ that is isotropic with respect to the induced $\Q/\Z$-valued bilinear form induced by the product on $M_A$), so that $M_A$ is an index-$2$ sublattice of a unimodular lattice $U_A$.
(See, for example,~\cite{Larson} for a topologically-minded treatment.)
In fact, $U_A$ is simply $L\oplus A \cup (L'\setminus L) \times (A'\setminus A) \subset L' \oplus A'$.
With a slight abuse of notation, we view $L$ and $A$ as subsets of $U_A$. It is easy to see that $L = A^\perp$ and $A = L^\perp$.
Since neither $L$ nor $A$ have a metabolizer, note that $L\otimes \Q \cap U_A = L$ and $A \otimes \Q \cap U_A = A$.
Note that $U_A$ is uniquely determined by both $L$ and $A$, but we do not make the dependency on $L$ explicit in the notation.

Dualizing, $U_A' \cong U_A$ is an index-2 subset of $M_A' = L' \oplus A'$. Moreover, the restriction maps $U_A' \to L'$ and $U_A' \to A'$ are both onto. In the same way, the restriction maps $\Char(U_A) \to \Char(L)$ and $\Char(U_A) \to \Char(A)$ are onto.

Two choices stand out. We can choose $A = L$, or $A = A_1$, where $A_1$ is the rank-$1$ lattice generated by a vector of square $2$ (i.e. a \emph{root}). We write $U$ and $M$ instead of $U_{A_1}$ and $M_{A_1}$.
Call $r$ one of the two generators of the auxiliary lattice $A = A_1$. 
By construction, $U$ contains $r$, a vector of norm $2$.

\begin{lemma}\label{l:charcongruence}
Let $\xi \in \Char(L)$ be a characteristic covector. Then $\xi^2$ is an integer, and $\xi^2 \equiv n\pm 1 \pmod 8$.
\end{lemma}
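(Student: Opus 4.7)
The plan is to extend $\xi$ to a characteristic vector of the auxiliary unimodular lattice $U = U_{A_1}$ and then apply the classical mod-$8$ congruence for characteristic vectors of a definite unimodular lattice (van der Blij).

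As a preliminary step, I would compute $\Char(A_1)$ explicitly. Writing $A_1 = \Z r$ with $r^2 = 2$, so that $A_1' = \tfrac12\Z r$, a covector $\alpha = (k/2)\,r \in A_1'$ is characteristic precisely when $k = \langle \alpha, r\rangle \equiv r^2 \equiv 0 \pmod 2$. Hence $\Char(A_1) = A_1$.

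Now, given $\xi \in \Char(L)$, the surjectivity of the restriction map $\Char(U) \to \Char(L)$ noted above would let me pick $\eta \in \Char(U)$ whose $L$-component is $\xi$; write $\eta = \xi + a$ with $a \in A_1'$. The $A_1$-component $a = \eta|_{A_1}$ must lie in $\Char(A_1) = A_1$, so $a = jr$ for some $j \in \Z$. In particular $\eta \in L \oplus A_1 = M$ and $\xi \in L$, which immediately forces $\xi^2 \in \Z$.

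Finally, since $U$ is positive definite unimodular of rank $n+1$, the classical congruence $\eta^2 \equiv \rk(U) \pmod 8$ gives $\eta^2 \equiv n+1 \pmod 8$. Using $\eta^2 = \xi^2 + j^2 r^2 = \xi^2 + 2j^2$, and noting that $2j^2 \equiv 0 \pmod 8$ when $j$ is even while $2j^2 \equiv 2 \pmod 8$ when $j$ is odd, we conclude
$$\xi^2 \;=\; \eta^2 - 2j^2 \;\equiv\; (n+1) - 2j^2 \;\equiv\; n \pm 1 \pmod 8.$$
The only substantive input is van der Blij's mod-$8$ congruence for characteristic vectors of a definite unimodular lattice; everything else is a direct computation within the unimodular extension $U$. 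The one mildly delicate point is recognizing that, because $\Char(A_1) = A_1$, the extension $\eta$ is automatically forced to lie in $M$ (so that $\xi$ lies in $L$ and its square is a genuine integer) rather than in the non-trivial glue coset $s + M \subset U$.
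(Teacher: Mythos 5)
Your proof is correct and follows essentially the same route as the paper's: extend $\xi$ to a characteristic covector of the unimodular overlattice $U = U_{A_1}$, apply van der Blij's mod-$8$ congruence to $U$, and subtract off the $A_1$-component, whose square is $0$ or $2$ mod $8$. Your added observation that $\Char(A_1) = A_1$ forces the extension into $M$ (hence $\xi \in L$ and $\xi^2 \in \Z$) is a slightly more explicit justification of the integrality claim, which the paper leaves implicit in the phrase ``by direct verification.''
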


\begin{proof}
Choose a characteristic covector $\xi_U \in \Char(U)$ that extends $\xi$, and call $\xi_A \in \Char(A_1)$ its restriction to $A_1$. Then $\xi_U^2 \equiv \rk U = n+1 \pmod 8$, and $\xi_U^2 = \xi^2 + \xi_A^2$. Since $\xi_A^2 \equiv 0,2 \pmod 8$ by direct verification, the lemma follows.
\end{proof}

We call $\Char_\pm(L)$ the set of characteristic covectors $\xi$ of $L$ for which $\xi^2 \equiv n\pm 1 \pmod 8$.
Let us go back to the case of $A$ arbitrary. Since $A$ is a determinant-$2$ lattice, $\Char_\pm(A)$ are defined, too.

\begin{lemma}\label{l:extend}
$\xi_u \in U_A'$ is a characteristic covector of $U_A$ if and only if there exists a sign $s$ such that $\xi_{u}|_L \in \Char_s(L)$ and $\xi_u|_{A} \in \Char_{-s}(A)$.
\end{lemma}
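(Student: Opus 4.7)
My plan is as follows. The forward direction should be immediate: restricting $\xi_u \in \Char(U_A)$ to $L$ and $A$ produces characteristic covectors (since $L, A \subset U_A$), and since $U_A$ is positive definite unimodular one has $\xi_u^2 \equiv \rk U_A = n + \rk A \pmod 8$. Combined with $\xi_u^2 = (\xi_u|_L)^2 + (\xi_u|_A)^2$ and Lemma~\ref{l:charcongruence} applied to both $L$ and $A$, this forces the signs $s, s' \in \{\pm 1\}$ of the two restrictions to satisfy $s' = -s$.

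For the converse, I would first establish that every characteristic covector of $L$ actually lies in $L$ (and likewise for $A$). The nontrivial element $g_L \in L'/L$ satisfies $g_L^2 \equiv \tfrac12 \pmod 1$ because the discriminant bilinear form on the two-element group $L'/L$ is non-degenerate. If $\xi = g_L + l \in L' \setminus L$ (with $l \in L$) were characteristic, applying the characteristic condition to $v = 2 g_L \in L$ would give $\xi \cdot 2 g_L = 2 g_L^2 + 2 l \cdot g_L$ odd but $(2 g_L)^2 = 4 g_L^2$ even, a contradiction. Hence $\xi_u = (\xi_u|_L, \xi_u|_A) \in L \oplus A = M_A$, and in particular $\xi_u \in \Char(M_A)$.

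The remainder is a coset count. A direct computation shows $\Char_+(L)$ and $\Char_-(L)$ are the two cosets of $2L$ inside $\Char(L) = \chi_0 + 2L'$: translating $\chi_0$ by $2l \in 2L$ preserves $\chi_0^2 \pmod 8$ (using that $\chi_0$ is characteristic), while translating by $2 g_L$ changes $\chi_0^2$ by $4(\chi_0 \cdot g_L + g_L^2) \in 4\Z + 2$, flipping the sign. Consequently each set $\Char_s(L) \times \Char_{-s}(A)$ is a single coset of $2 M_A$ in $M_A$, and the opposite-sign subset $\Char_+(L) \times \Char_-(A) \cup \Char_-(L) \times \Char_+(A)$ is the union of two such cosets. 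On the other hand $\Char(U_A)$, itself a coset of $2U_A$, decomposes into the two cosets $\chi + 2M_A$ and $\chi + 2(g_L + g_A) + 2 M_A$ of $2 M_A$ (since $[U_A : M_A] = 2$). Adding $2(g_L + g_A)$ flips both the $L$- and $A$-signs, so these two cosets lie in \emph{different} opposite-sign cosets; combined with the forward inclusion, this forces $\Char(U_A)$ to equal the opposite-sign subset. The main technical hurdle I anticipate is the first step of the converse, i.e., showing $\Char(L) \subset L$ via the discriminant-form argument; once that is in place the coset count follows fairly mechanically.
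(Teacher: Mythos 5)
Your proof is correct and takes essentially the same route as the paper: the forward direction is the same mod-$8$ computation, and the converse is the same observation that translating by a glue vector in $U_A \setminus M_A$ swaps the two sign classes, packaged as a coset count rather than as the paper's "both are affine subspaces over $2U_A'$." Your preliminary observation that $\Char(L) \subset L$ for a determinant-$2$ lattice is correct and pleasant, but not needed: the paper's version of the argument runs directly in $M_A' = L' \oplus A'$, where $\Char_\pm(L) \times \Char_\mp(A)$ is still a single coset of $2M_A$ and $\Char(U_A)$ is still a single coset of $2U_A'$.
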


Recall that if $\Lambda$ is a lattice, then $\Char(\Lambda)$ is affine over $2\Lambda'$ (i.e. $\Char(\Lambda) = \xi + 2\Lambda'$ for any $\xi \in \Char(\Lambda)$). An easy extension of this fact is the observation that $\Char_{\pm}(L)$ is affine over $2L$ (and not over $2L'$), and that translations by elements in $2L'\setminus 2L$ swap $\Char_+(L)$ and $\Char_-(L)$.

\begin{proof}
Call $\xi_\ell = \xi_u|_L$ and $\xi_a = \xi_u|_{A}$.
The `only if' direction is clear, since $\xi_\ell^2 + \xi_a^2 = \xi_u^2 \equiv n+\rk A \pmod 8$.

Let us look at the `if' direction. Now, let $C := \Char_+(L) \times \Char_-(A) \cup \Char_-(L) \times \Char_+(A) \subset M'$.
By the `only if' direction above, $C$ contains $\Char(U_A)$, which is an affine space over $2U_A'$.
On the other hand, $\Char_\pm(L) \times \Char_\mp(A)$ is also affine over $2L+2A = 2M_A$, and, by the remark below the statement, if $v \in U_A \setminus (L\oplus A) = (L'\setminus L) \times (A'\setminus A)$ and $\xi \in \Char_\pm(L) \times \Char_\mp(A)$ then $\xi + 2v \in \Char_\mp(L) \times \Char_\pm(A)$, so that $C$ is affine over $2U_A'$, too.
In particular, $C$ and $\Char(U_A)$ are both affine subspaces over $2U_A'$, and since $\Char(U_A) \subset C$ then they are equal.
\end{proof}

We denote with $I^m$ the lattice $\Z^m$ with the diagonal intersection form (i.e. with an orthonormal basis). The lattice $\Delta_n := I^{n-1} \oplus A_1$ is the unique bimodular lattice of rank $n$ whose intersection form is diagonal.

\begin{lemma}\label{l:diagonal}
The lattice $U$ is diagonal if and only if $L$ is.
\end{lemma}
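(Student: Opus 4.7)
The plan is to use the observation established earlier in this section that $L = A_1^\perp = r^\perp$ inside $U$, where $r$ is a generator of the auxiliary root summand. The lemma then becomes a statement about a root in a rank-$(n+1)$ positive definite unimodular lattice and its orthogonal complement, which I would attack by a direct computation in each direction.

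For the forward direction, suppose $L \cong I^{n-1} \oplus A_1$. The $I^{n-1}$ summand is already unimodular and plays no role in the gluing, so the task reduces to showing that the unique unimodular overlattice of $A_1 \oplus A_1$ is $I^2$. Writing the two root generators as $r_1$ and $r_2$, the unique nontrivial glue vector is $e := \tfrac{1}{2}(r_1 + r_2)$, and one checks $e^2 = 1$. Setting $f := r_1 - e = \tfrac{1}{2}(r_1 - r_2)$ produces a second orthogonal unit vector in the overlattice, so the extension is isomorphic to $I^2$ and hence $U \cong I^{n+1}$.

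For the backward direction, assume $U \cong I^{n+1}$ and classify its roots: if $v = \sum a_i e_i$ satisfies $v^2 = 2$, then $\sum a_i^2 = 2$ with $a_i \in \Z$ forces exactly two coordinates to be $\pm 1$ and the rest to vanish. Thus every root is of the form $\pm e_i \pm e_j$, and the orthogonal group of $I^{n+1}$ acts transitively on these. I may therefore assume $r = e_1 + e_2$, in which case $r^\perp$ is spanned by $e_1 - e_2$ (a vector of square $2$) together with the $n-1$ orthonormal vectors $e_3, \dots, e_{n+1}$. This is precisely $\Delta_n = I^{n-1} \oplus A_1$, so $L$ is diagonal.

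I do not anticipate any real obstacle; both halves amount to short explicit linear-algebra computations once the identification $L = r^\perp$ is in hand. The one point worth flagging is the uniqueness of the unimodular overlattice $U$, which was already established via the metabolizer argument at the start of the section and guarantees that the forward direction is well defined.
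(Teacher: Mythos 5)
Your proof is correct and follows essentially the same route as the paper's: in both cases the key step is the identification $L = r^\perp$ inside $U$, combined with the classification of roots of $I^{n+1}$ (for the backward direction) and the uniqueness of the unimodular overlattice $U$ (for the forward direction). The paper handles the forward direction more tersely by noting that the backward computation already exhibits $\Delta_n$ as a root complement in $I^{n+1}$ and invoking uniqueness, whereas you give a slightly more explicit glue-vector calculation; this is a cosmetic difference only.
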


\begin{proof}
Suppose that $U$ is diagonal, with base $e_0,\dots,e_n$. Then the generator $r$ of $A$ is a root in $U$. In particular, up to re-indexing the generators of $U$ (and possibly flipping signs), $r = e_0 - e_1$. Now, $L \cong \langle r \rangle^\perp$, but $\langle r \rangle^\perp$ is spanned by $e_0 + e_1, e_2, \dots, e_n$, and in particular it is isomorphic to $\Delta_n$.

If $L = \Delta_n$, then $L$ embeds in $I^{n+1}$ as we have just seen; however, $U$ is uniquely determined by $L$, so $U \equiv I^{n+1}$.
\end{proof}

We recall a result of Elkies on characteristic covectors in unimodular lattices.

\begin{theorem}[\cite{Elkies}]\label{t:elkies1}
Let $\Lambda$ be unimodular lattice of rank $m$. Then
\[
\min_{\xi \in \Char(\Lambda)} \xi^2 \le m,
\]
and the equality is attained if and only if $\Lambda$ is the diagonal lattice $I^m$.
\end{theorem}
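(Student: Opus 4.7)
The plan is to apply Poisson summation to the characteristic theta series, following Elkies. Set $m = \rk\Lambda$ and $\mu = \min_{\xi \in \Char(\Lambda)} \xi^2$, and fix any reference $\xi_0 \in \Char(\Lambda)$. Since $\Char(\Lambda) = \xi_0 + 2\Lambda$, the Gaussian series
\[
S(t) \defeq \sum_{\xi \in \Char(\Lambda)} e^{-\pi t \xi^2}, \qquad t > 0,
\]
is a coset theta series of $2\Lambda$. Applying Poisson summation, and combining the unimodularity of $\Lambda$ (so $(2\Lambda)^* = \tfrac{1}{2}\Lambda$, with covolume $2^{-m}$) with the characteristic identity $\langle \xi_0, u\rangle \equiv u^2 \pmod 2$, yields the functional equation
\[
S(t) = (4t)^{-m/2} \sum_{u \in \Lambda} (-1)^{u^2} e^{-\pi u^2/(4t)}.
\]

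From this the bound $\mu \le m$ falls out of a cusp comparison. As $t \to 0^+$ the $u=0$ term dominates the right-hand side, giving $S(t) \sim (4t)^{-m/2}$; as $t \to \infty$, the left-hand side decays like $N_\mu\, e^{-\pi t \mu}$ with $N_\mu \ge 1$ counting the minimizers. In modular-forms language, the comparison of these two cusps (via the inversion $\tau \mapsto -1/(4\tau)$ relating the characteristic theta series to an alternating theta series of weight $m/2$) pins the exponent down to $\mu \le m$.

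For the equality case, the auxiliary direct observation that minimality of $\xi_0$ forces $|\langle \xi_0, v\rangle| \le v^2$ for every $v \in \Lambda$ (just compare $\xi_0 \pm 2v$ against $\xi_0$) becomes sharp. Combined with term-by-term matching of coefficients in the functional equation when $\mu = m$, one extracts $m$ pairwise orthogonal norm-one vectors in $\Lambda$, and these necessarily span, giving $\Lambda \cong I^m$. Conversely, $\xi = (\pm 1,\ldots,\pm 1) \in \Char(I^m)$ has square $m$, realizing equality. The main obstacle is this equality case: the modular step gives the inequality efficiently, but ruling out non-diagonal unimodular lattices that saturate $\mu = m$ requires using the full coefficient structure of the functional equation rather than just its leading asymptotic.
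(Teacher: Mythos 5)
The paper does not give its own proof of this theorem: Theorem~\ref{t:elkies1} is quoted directly from Elkies' paper and used as a black box in the proof of Theorem~\ref{p:2-elkies}. So there is no in-paper argument to compare yours against; I can only judge whether your sketch of Elkies' actual argument holds water.

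Your setup is correct and follows Elkies: the characteristic coset $\xi_0 + 2\Lambda$, the Gaussian sum $S(t)$, and the Poisson-summation functional equation you write are all right (using $(2\Lambda)^* = \tfrac12\Lambda$, covolume $2^m$, and $e^{\pi i \langle\xi_0,u\rangle} = (-1)^{u^2}$). But the inequality $\mu \le m$ does \emph{not} ``fall out of a cusp comparison'' in the sense you describe. The two asymptotics you quote --- $S(t)\sim (4t)^{-m/2}$ as $t\to 0^+$ and $S(t)\sim N_\mu e^{-\pi t\mu}$ as $t\to\infty$ --- are asymptotics at opposite ends of the half-line and place no constraint whatsoever on $\mu$: any positive $\mu$ is compatible with both. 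What Elkies actually uses is the finite-dimensionality of the space of modular forms of weight $m/2$ for the relevant congruence subgroup, via Hecke's expansion $\theta_\Lambda = \sum_{0\le j\le m/8} a_j\, \theta_3^{m-8j}\Delta_8^j$ with $a_0 = 1$. The modular transformation then rewrites the shadow theta as a corresponding combination involving $\theta_2^{m-8j}$ (whose $q$-expansion begins at exponent $(m-8j)/4$), and one reads off that the coefficient at shadow norm $m/4$ is pinned down to a nonzero value. That is the actual content of the ``cusp comparison,'' and it cannot be shortcut by asymptotics alone. The same issue infects your equality case: the auxiliary observation $|\langle\xi_0,v\rangle|\le v^2$ is correct and is indeed part of Elkies' argument, but ``term-by-term matching of coefficients'' is doing all the real work and you haven't said what it produces --- concretely one must show the forced vanishing of coefficients implies $\Lambda$ has $2m$ vectors of norm $1$, which span $\Lambda$ orthogonally. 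As written, the proposal correctly identifies the right machinery but elides the step where the machinery actually bites.
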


We find it convenient to introduce the notation for the defect of a lattice.
The \emph{defect} $d(\Lambda)$ of $\Lambda$ is:
\[
d(\Lambda) = \min_{\xi \in \Char(\Lambda)} \frac{\xi^2- \rk\Lambda}4 
\]
Note that Elkies' theorem can be rephrased as saying that a unimodular lattice has non-positive defect, and that the defect is $0$ if and only if the lattice is diagonal.

When $L$ is bimodular, using Lemma~\ref{l:charcongruence} we can identify \emph{two} defects, denoted with $d_\pm(L)$:
\[
d_\pm(L) = \min_{\xi \in \Char_\pm(L)} \frac{\xi^2-\rk L}4 
\]
Note that $d_\pm(L) \equiv \pm\frac14 \pmod 2$.

The main result of this section is a bimodular version of of Elkies' theorem and gives another characterization of $\Delta_n$ (see~\cite{OwensStrle-char} for a different characterization).

\begin{theorem}\label{p:2-elkies}
For every bimodular lattice $L$, $d_+(L) + d_-(L) \le 0$. Moreover, if equality is attained, $L$ is diagonal, and in particular $d_\pm(L) = \pm\frac14$.
\end{theorem}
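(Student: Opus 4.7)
The plan is to apply Elkies' theorem (Theorem~\ref{t:elkies1}) to the rank-$2n$ unimodular overlattice $U_L$ obtained from the auxiliary choice $A = L$. By Lemma~\ref{l:extend}, a pair $(\xi_1, \xi_2) \in L' \oplus L'$ belongs to $\Char(U_L)$ exactly when $\xi_1 \in \Char_s(L)$ and $\xi_2 \in \Char_{-s}(L)$ for some sign $s$, and for each such sign the two components can be minimized independently. Consequently
\[
\min_{\xi \in \Char(U_L)} \xi^2 \;=\; 4\bigl(d_+(L) + d_-(L)\bigr) + 2n,
\]
and Elkies' bound $\min \xi^2 \le \rk U_L = 2n$ immediately yields the desired inequality $d_+(L) + d_-(L) \le 0$, with equality forcing $U_L \cong I^{2n}$ by the rigidity clause.

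For the equality case, I will prove $L \cong \Delta_n$ by induction on $n$. The base case $n = 1$ holds since any positive definite rank-$1$ lattice of determinant $2$ is $A_1 = \Delta_1$. For the inductive step, it is enough to produce a norm-$1$ vector in $L$: such a vector splits $L$ as $\Z \oplus L_0$ with $L_0$ bimodular of rank $n-1$, a short computation on characteristic covectors shows $d_\pm(L_0) = d_\pm(L)$, and the induction hypothesis applied to $L_0$ gives $L_0 \cong \Delta_{n-1}$, hence $L \cong \Delta_n$.

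To produce a norm-$1$ vector in $L$, fix an orthonormal basis $e_1, \ldots, e_{2n}$ of $U_L = I^{2n}$. The norm-$1$ vectors of $I^{2n}$ are exactly the $\pm e_i$, and using the abstract isomorphism $L^\perp \cong L$ afforded by the choice $A = L$, it suffices to show that some $e_i$ lies in $L \cup L^\perp$. Suppose not. Then each $e_i$ represents the nontrivial coset of $U_L / (L \oplus L^\perp) \cong \Z/2$, so the projection $f_i := \pi_L(e_i)$ lies in $L' \setminus L$. A determinant check (every bimodular $L$ has $\det L' = 1/2$, so $L'$ cannot be an integer lattice, and the discriminant form on $L'/L$ is therefore \emph{odd}) forces $|f_i|^2 \equiv 1/2 \pmod \Z$, hence $|f_i|^2 = 1/2$ because $|f_i|^2 \le 1$. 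The Gram matrix $M = (f_i \cdot f_j)$ represents orthogonal projection onto $L \otimes \R$, so $M^2 = M$ and $M_{ii} = 1/2$, which gives $\sum_{j \ne i} M_{ij}^2 = 1/4$. Since $2L' \subset L$, one has $M_{ij} \in \tfrac{1}{2}\Z$, so each row contains exactly one nonzero off-diagonal entry, equal to $\pm 1/2$. This partitions $\{1,\ldots,2n\}$ into $n$ pairs and identifies $L$ with the sublattice of $I^{2n}$ generated by $n$ pairwise orthogonal norm-$2$ vectors; that is, $L \cong A_1^n$, contradicting $\det L = 2$ as soon as $n \ge 2$.

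The main technical obstacle is the last step: translating the abstract identity $M^2 = M$ into rigid combinatorial information about how the basis vectors of $I^{2n}$ distribute between $L$ and $L^\perp$. The integrality constraint $M_{ij} \in \tfrac{1}{2}\Z$, available precisely because $L$ is bimodular, is what forces the sparseness of $M$ that collapses the geometry into the $A_1^n$ form and closes the induction.
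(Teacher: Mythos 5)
Your proof is correct, and its handling of the rigidity (equality) case is genuinely different from the paper's. The inequality itself is obtained the same way: apply Elkies' theorem (Theorem~\ref{t:elkies1}) to the double $D = U_L$ and use Lemma~\ref{l:extend} to compute $d(D) = d_+(L) + d_-(L)$. For the equality case, the paper's argument (after reducing to $L$ minimal) first shows $L$ is a root lattice spanned by the projections $\pi_\ell(2e_i)$, and then splits into two cases: when $d_\pm(L) = \pm\frac14$ it passes to the auxiliary overlattice $U_{A_1}$ and invokes Lemma~\ref{l:diagonal}, while when $|d_\pm(L)| \ge \frac74$ it runs a bicolorability argument on the root graph together with Bessel's inequality to produce a contradiction. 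You instead induct on the rank $n$, and the key step is the claim that if no orthonormal basis vector $e_i$ of $U_L \cong I^{2n}$ lies in $L \cup L^\perp$, then the Gram matrix $M = (f_i \cdot f_j)$ of the projections $f_i = \pi_L(e_i)$ is an idempotent with $M_{ii} = \frac12$ and $M_{ij} \in \frac12\Z$, which forces exactly one off-diagonal $\pm\frac12$ per row, pinning down $L$ as a copy of $A_1^n$ and contradicting $\det L = 2$ once $n \ge 2$. This is arguably more elementary: it avoids Bessel's inequality, the root-graph bicolorability lemma, and the case split on the value of $d_\pm$, replacing them with a self-contained combinatorial analysis of the projection matrix. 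Two small points you leave implicit and should state: the identification of $L$ with the span of the $2f_i$'s uses the primitivity $L \otimes \Q \cap U_L = L$ (established near the start of Section~\ref{lattices}), and the reduction $d_\pm(L_0) = d_\pm(L)$ after splitting off a norm-$1$ vector rests on the observation that $c^2 \equiv 1 \pmod 8$ for odd $c$, so the $\Char_\pm$ labelling is preserved under that splitting.
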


As mentioned above, a root in a lattice $\Lambda$ is a vector of square $2$; we say that $\Lambda$ is a \emph{root lattice} if it is rationally spanned by its set of roots. To a collection $R \subset \Lambda$ of $\Q$-linearly independent roots we associate an edge-weighted graph $G(R)$ as follows: the set of vertices of $G(R)$ is $R$, and there is an edge joining $r, s \in R$ with weight $r\cdot s$ if $r \cdot s \neq 0$. Note that if $r, s \in R$ are distinct, they are linearly independent, so by Cauchy--Schwarz $r\cdot s \in \{-1,0,1\}$.

We will make the choice $A = L$ in the proof. To make the notation lighter, we denote $U_L$ by $D$, and we call it the \emph{double} of $L$. To distinguish between the two summands in $M_A \cong L \oplus L$, we still call $A$ its second summand; however, we drop the dependency on $A$ from the notation. In summary, we have $M = L\oplus A$ as an index-$2$ sublattice of $D$, and $L$ and $A$ are viewed as a pair of orthogonal sublattices in $M$ and in $D$.

We call $\pi_\ell \co M \to L$ and $\pi_a \co M \to A$ the two orthogonal projections, and $\rho_\ell \co D' \to L'$ and $\rho_a \co D' \to A'$ the two restriction maps. In fact, $\pi_\ell$ and $\rho_\ell$ are both restrictions of a linear map $D\otimes \Q \to L \otimes \Q$ (and similarly for $A$).

\begin{proof}
We can assume, without loss of generality, that $L$ is minimal (i.e. it contains no vectors of norm $1$): indeed, it is easy to verify that $d_\pm(L) = d_\pm(L \oplus I^m)$, and $L$ is diagonal if and only if $L\oplus I^m$ is.

Call $n$ the rank of $L$. Consider now $D$, the double of $L$. $D$ is a unimodular lattice, so $d(D) \le 0$. By Lemma~\ref{l:extend}, $d(D) = \min\{d_+(L) + d_-(A), d_-(L) + d_+(A)\} = d_+(L) + d_-(L)$, which proves the first assertion.

Let us now suppose that $d_+(L) + d_-(L) = 0$; again by Elkies' theorem, this implies that $D$ is the diagonal lattice $I^{2n}$. Call $e_1, \dots, e_{2n}$ an orthonormal basis of $D$. 

We claim that $L$ is a root lattice.

Since $L$ is minimal, $e_i \in D\setminus M$. However, since $M$ has index $2$ in $D$, $2e_i \in M$. We also know that $2e_i \not\in L \cup A$: indeed, as mentioned at the beginning of the section, $L \otimes \Q \cap D = L$, so if $2e_i \in L$, then also $e_i \in L$. (By symmetry, this proves the statement for $A$ as well.)

This implies that $r_i := \pi_\ell(2e_i)$ and $\pi_a(2e_i)$ are two non-zero vectors whose squares sum to $(2e_i)^2 = 4$; since $L$ is minimal, they both have square $2$. Since the collection $\{2e_i\}$ is a rational basis of $D$, the collection $\{r_i\}$ is a set of roots that rationally spans $L$, which proves the claim.

If $d_\pm(L) = \pm\frac14$, then, since $d_\pm(A_1) = \pm\frac14$, $d(U) = 0$ and by Elkies' theorem $U$ is diagonal
(recall that $U$ is the unimodular overlattice of $L \oplus A_1$). By Lemma~\ref{l:diagonal} $L$ is diagonal.

Suppose now $d_\pm(L) \neq \pm \frac14$, so that in particular $|d_+(L)| = |d_-(L)| =: d \ge \frac74$. Consider the characteristic covector $\xi_0 = e_1 + \dots + e_{2n} \in \Char(D)$, and, for each $i$, the characteristic covector $\xi_i = \xi_0 - 2e_i \in \Char(D)$. Note that $\xi_i$ is norm-minimizing among all characteristic covectors in $D$ for $i = 0,\dots, 2n$, and so that its restrictions $\lambda_i = \rho_\ell(\xi_i) \in L'$ and $\alpha_i = \rho_a(\xi_i)$ are characteristic and they minimize the norm \emph{in their congruence class}. That is, if $\lambda_i \in \Char_{+}(L)$, then $\lambda_i$ minimizes the norm among all elements in $\Char_{+}(L)$; in this case, $\alpha_i \in \Char_-(A)$ and $\alpha_i$ minimizes the norm in $\Char_-(A)$.

Without loss of generality, let us suppose that $\lambda_0 \in \Char_+(L)$. The key observation is that $\lambda_i \in \Char_-(L)$ for each $i = 1,\dots,2n$.
This follows from the fact, observed above, that $e_i \not\in L$, so that $\pi_\ell(2e_i) \in 2L'\setminus 2L$, and in particular $\pi_\ell(2e_i)$ swaps $\Char_+(L)$ and $\Char_-(L)$.

Now, since $\lambda_i \in \Char_-(L)$ for each $i >0$ is a norm-minimizer in its class:
\[
|\lambda_0^2 - \lambda_i^2| = 8d \ge 14.
\]
However,
\[
\lambda_0^2 - \lambda_i^2 = 2\langle \lambda_0, r_i\rangle - r_i^2,
\]
so that for each $i > 0$:
\[
|\langle \lambda_0, r_i)\rangle| \ge 4d-1 \ge 3.
\]

Pick a subset $J \subset \{1, \dots, 2n\}$ of indices such that $R = \{r_j \mid j \in J\}$ is a rational basis for $L$; this in particular means that $|J| = n$. Up to relabelling, let us assume $J = \{1,\dots,n\}$. We claim that $G(R)$ is bicolorable.

To see this, we will prove that all cycles in $G(R)$ have even length, and in fact they all have length $4$. Assume by contradiction that there is a cycle $C \subset G(R)$ of length $k \ge 3, k \neq 4$. Up to another relabelling, let us assume that $C$ comprises $r_1,\dots,r_k$ in this order. Up to replacing $r_i$ with $-r_i$ for some values of $i$, we can assume that all edges $(r_1,r_2), \dots, (r_{k-1},r_k)$ are labelled with $-1$. Under this assumption, $(r_k, r_1)$ has to be labelled by $+1$, for otherwise $(r_1+\dots+r_k)^2 = 0$, which would contradict the fact that $R$ is a linearly independent set. Now recall that $R \subset L \subset D \cong I^{2n}$ comprises elements of square $2$. So there is a basis $f_1,\dots,f_{2n}$ of $D$ such that $r_1 = f_1 - f_2, \dots, r_{k-1} = f_{k-1} - f_k$, and $r_k = f_k + f_1$; but then $r_1 + \dots + r_k = 2f_1 \in L$, which implies $f_1 \in L$ since $L\otimes \Q \cap D = L$, and this contradicts the minimality of $L$.

Since $G(R)$ is bicolorable there is a subset $R' \subset R$, indexed by $J' \subset J$, containing $\lceil \frac n2 \rceil$ roots that are pairwise orthogonal.

Now, by Bessel's inequality:
\[
\lambda_0^2 \ge \sum_{j \in J'} \frac{\langle \lambda_0, r_j \rangle^2}{r_j^2} \ge \left\lceil \frac n2 \right\rceil \cdot \frac92 > 2n,
\]
which contradicts the fact that $\lambda_0^2 \le \lambda_0^2 + \alpha_0^2 = \xi_0^2 = 2n$.
\end{proof}

\begin{remark}
Note that the assumption that the length of the cycle is not $4$ is in fact used: for the $4$-cycle as above, the embedding given by $(f_1-f_2, f_2-f_3, -f_1-f_2, -f_1+f_4)$ has components that sum to $-f_1-f_2-f_3+f_4$.
\end{remark}

\end{section}

\begin{section}{The obstruction}\label{obstruction}

In this section we discuss a topological application of Theorem~\ref{p:2-elkies}.
We start with an algebraic topology lemma.

\begin{lemma}\label{extension}
Let $X$ be a compact, oriented $4$-manifold with boundary $Y$, a closed $3$-manifold with $H_1(Y)$ finite of square-free order.
Then $|{\det Q_X}| = |H_1(Y)|$ and all spin$^c$ structures on $Y$ extend to $X$.
\end{lemma}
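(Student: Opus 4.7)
The plan is to combine the homology long exact sequence of the pair $(X,Y)$ with Poincar\'e--Lefschetz duality and the universal coefficient theorem to derive the identity
\[
|H_1(Y)| = |I_Y|^2 \cdot |\det Q_X|,
\]
where $I_Y$ denotes the image of the inclusion-induced map $H_1(Y) \to H_1(X)$. The square-free hypothesis then forces $|I_Y| = 1$, which yields both assertions of the lemma at once.

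Since $H_1(Y)$ is finite, $Y$ is a rational homology sphere, so $H_2(Y) = 0$; assuming $X$ and $Y$ connected (otherwise work component by component), the relevant portion of the long exact sequence of the pair reads
\[
0 \to H_2(X) \xrightarrow{j} H_2(X,Y) \to H_1(Y) \to H_1(X) \to H_1(X,Y) \to 0.
\]
I will use Poincar\'e--Lefschetz duality to identify $H_2(X,Y) \cong H^2(X)$ and $H_1(X,Y) \cong H^3(X)$, and the universal coefficient theorem to decompose these as $\mathrm{Hom}(H_2(X),\Z) \oplus T_1$ and $\mathrm{Hom}(H_3(X),\Z) \oplus T_2$ respectively, where $T_i := T(H_i(X))$ is the torsion subgroup.

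The heart of the proof is a torsion bookkeeping calculation at two places in the sequence. First, the surjection $H_1(X) \twoheadrightarrow H_1(X,Y) \cong H^3(X)$ has kernel $I_Y \subseteq T_1$; comparing torsion parts of both sides yields $T_1/I_Y \cong T_2$, so $|I_Y| = |T_1|/|T_2|$. Second, applying the snake lemma to $j$ together with the free/torsion short exact sequences for its source and target produces
\[
|\cokerr(j)| = |\det Q_X| \cdot |T_1|/|T_2|,
\]
with $|\det Q_X|$ coming from the adjoint of the intersection form on the free quotients and $|T_1|/|T_2|$ from the quotient of torsion summands. Combining this with $|\cokerr(j)| = |H_1(Y)|/|I_Y|$, read off the displayed sequence, gives the promised identity $|H_1(Y)| = |I_Y|^2 \cdot |\det Q_X|$. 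Since $|I_Y|^2$ is a perfect square dividing the square-free integer $|H_1(Y)|$, one must have $|I_Y| = 1$, hence $|\det Q_X| = |H_1(Y)|$ and the inclusion-induced map $H_1(Y) \to H_1(X)$ vanishes.

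For the $\spinc$ claim, the restriction $\Spinc(X) \to \Spinc(Y)$ is equivariant over $H^2(X) \to H^2(Y)$, so every $\spinc$ structure on $Y$ extends if and only if the restriction on $H^2$ is surjective, equivalently the connecting map $\delta\co H^2(Y) \to H^3(X,Y)$ in the cohomology long exact sequence vanishes. By the standard compatibility between the homology and cohomology long exact sequences under Poincar\'e--Lefschetz duality, $\delta$ corresponds via $H^2(Y) \cong H_1(Y)$ and $H^3(X,Y) \cong H_1(X)$ to the inclusion-induced map $H_1(Y) \to H_1(X)$, which has already been shown to vanish. The step I expect to require the most care is the snake-lemma bookkeeping producing the factor $|T_1|/|T_2|$, where one must verify in particular that $j$ restricts to an injection $T_2 \hookrightarrow T_1$ so that the index of torsion summands comes out exactly to $|T_1|/|T_2|$.
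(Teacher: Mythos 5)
Your argument is correct and is, up to Poincar\'e--Lefschetz duality, the same computation as the paper's, packaged a bit differently. The paper works with the cohomology long exact sequence
\[
0 \to H^2(X,Y) \to H^2(X) \to H^2(Y) \to H^3(X,Y) \to H^3(X) \to 0,
\]
sets $A = T(H^3(X))$, $B = T(H^3(X,Y))$, and uses a nine-lemma/torsion bookkeeping argument to obtain $|H_1(Y)| = (|B|/|A|)^2\,|\det Q_X|$; you work with the dual homology sequence and the snake lemma to get $|H_1(Y)| = |I_Y|^2\,|\det Q_X|$. Unwinding the identifications $T(H^3(X,Y)) \cong T(H_1(X))$ and $T(H^3(X)) \cong T(H_2(X))$ shows $|I_Y| = |B|/|A|$, so the two identities coincide. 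Your phrasing of the spin$^c$ conclusion---that $I_Y = 0$, i.e.\ $\iota_*\co H_1(Y)\to H_1(X)$ vanishes, and that under the duality between the two long exact sequences this is equivalent to $\delta\co H^2(Y)\to H^3(X,Y)$ vanishing---is arguably cleaner than the paper's, which deduces surjectivity from $|G| = 1$. Two small points you flagged deserve to actually be carried out: the injectivity of $T_2 \hookrightarrow T_1$ does hold because $j$ itself is injective ($H_2(Y)=0$), and in fact the same snake-lemma diagram also proves nondegeneracy of $Q_X$ along the way, since $\ker(F_2 \to F_2^*)$ is simultaneously torsion-free and embeds into the finite group $\mathrm{coker}(T_2\to T_1)$; and the isomorphism $T(H_1(X)/I_Y) \cong T_1/I_Y$ uses the splitting $H_1(X)\cong F_1 \oplus T_1$ with $I_Y \subseteq T_1$. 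Neither is an obstruction, just loose ends worth tying.
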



\begin{proof}
Let us look at the long exact sequence for the pair $(X, Y)$:
\[
0 \to H^2(X,Y) \to H^2(X) \to H^2(Y) \to H^3(X,Y) \to H^3(X) \to 0.
\]
All spin$^c$ structures on $Y$ extend if and only if the restriction map $H^2(X) \to H^2(Y)$ is onto.
Since $H^2(Y)$ is finite, $H^3(X,Y)$ and $H^3(X)$ have the same rank, $b_3$;
call $B$ and $A$ their torsion subgroups, respectively.
For the same reason, $H^2(X,Y)$ and $H^2(X)$ have the same rank, $b_2$; 
by the universal coefficient theorem and Poincar\'e--Lefschetz duality, their torsion subgroups are isomorphic to $A$ and $B$, respectively.
Since torsion can only map to torsion, call $\tau_i$ the map obtained by restricting $\pi_i^*: H^i(X,Y) \to H^i(X)$ to the torsion subgroup, and then projecting the target to the torsion subgroup;
we regard $\tau_2$ as a map $\tau_2: A\to B$, and $\tau_3$ as a map $\tau_3: B\to A$.
Note that, since $\pi_3^*$ is onto and $H^2(Y)$ is torsion, the induced map on the quotient $H^3(X,Y)/B \to H^3(X)/A$ is injective (in fact, an isomorphism).
Finally, the map $H^2(X,Y)/A\to H^2(X)/B$ is represented by the intersection form $Q_X$ of $X$.

With this in place, we can then apply the nine lemma to
\[
\xymatrix{
0\ar[r] 	& B\ar[r] \ar[d] 		& H^3(X,Y)\ar[r] \ar[d] 	& H^3(X,Y)/B \ar[d] \ar[r]	& 0\\
0\ar[r] 	& A\ar[r] 			& H^3(X)\ar[r] 			& H^3(X)/A\ar[r]		& 0\\
}
\]
and
\[
\xymatrix{
0\ar[r] 	& A\ar[r] \ar[d]	& H^2(X,Y) \ar[r] \ar[d]	& H^2(X,Y)/A \ar[d]\ar[r] 	& 0\\
0\ar[r] 	& B\ar[r]		& H^2(X) \ar[r]			& H^2(X)/B \ar[r] 		& 0\\
}
\]
to obtain that $\kerr \pi_3^*$ is a group $G$ of order $\left|\kerr\tau_3\right| = |B|/|A|$, and that $\cokerr \pi_2^*$ is a group $H$ of order $\left|\cokerr \tau_2\right|\cdot \left|\cokerr Q_X\right| = (|B|/|A|)\cdot |{\det Q_X}|$.

It follows that we can extract a short exact sequence of finite groups:
\[
0 \to H \to H_1(Y) \to G \to 0,
\]
from which
\[
|H_1(Y)| = |H|\cdot|G| = \frac{|B|^2}{|A|^2}|{\det Q_X}|.
\]
Since we assumed that $H_1(Y)$ has square-free order, we conclude that $|A| = |B|$ and $|H_1(Y)| = |{\det Q_X}|$;
each of these conclusions imply that $H^2(X)\to H^2(Y)$ is onto.
\end{proof}

To a closed, oriented, spin$^c$ rational homology 3-sphere $(Y,\t)$, Ozsv\'ath and Szab\'o~\cite{OSz-annals} associate a family of invariants, collectively called \emph{Heegaard Floer homology};
we will work with two of these, namely $\HFp(Y,\t)$ and $\HFoo(Y,\t)$, and we will recall a few results from~\cite{OSz}.
For convenience, we will work over the field of two elements.


Recall that from Heegaard Floer homology we can extract a rational number $d(Y,\t)$, called the \emph{correction term} of $(Y,\t)$, that is an invariant under spin$^c$ rational homology cobordism~\cite{OSz}, and reduces modulo 2 to the rho-invariant $\rho(Y,\t)$.
The correction term $d(Y,\t)$ is defined to be the minimal grading of any non-zero element in the image of the map $\HFoo(Y,\t) \xrightarrow{\pi} \HFp(Y,\t)$.

In the remainder of the section $Y$ will always denote a closed, oriented 3-manifold with $H_1(Y)\cong \Z/2\Z$; we say that $Y$ is a \emph{homology} $\RP^3$.
Hence $Y$ has exactly two spin$^c$ structures.

Our argument will depend on the value of these correction terms.
First we pin down their value modulo 2.

\begin{proposition}\label{labelling}
We can label the two spin$^c$ structures on $Y$ as $\t_+$ and $\t_-$, so that $d(Y,\t_\pm) \equiv \pm \frac{1}{4} \pmod 2$.
\end{proposition}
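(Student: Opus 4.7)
The plan is to reduce modulo $2$ via the identification of $d(Y,\t)$ with the spin$^c$ rho-invariant and then to apply the bimodular lattice framework of Section~\ref{lattices}. First I would pick any smooth, compact, oriented $4$-manifold $X$ with $\partial X = Y$. Since $|H_1(Y)| = 2$ is square-free, Lemma~\ref{extension} guarantees that the lattice $L := H_2(X;\Z)/\text{tors}$ (with the intersection form of $X$) is bimodular of signature $\sigma(X)$, and that both spin$^c$ structures $\t_0, \t_1$ on $Y$ extend to spin$^c$ structures $\s_0, \s_1$ on $X$. The difference $\alpha := \s_1 - \s_0 \in H^2(X;\Z)$ restricts to the generator of $H^2(Y;\Z) \cong \Z/2\Z$; via Poincar\'e--Lefschetz duality, $\alpha$ represents the non-trivial element of $L'/L \cong \Z/2\Z$.

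Next, Wu's formula tells me that each $c_1(\s_i) \in L'$ is a characteristic covector of $L$, and the two differ by $c_1(\s_1) - c_1(\s_0) = 2\alpha$, which lies in $2L' \setminus 2L$. By the observation preceding Lemma~\ref{l:extend}, translations by elements of $2L' \setminus 2L$ swap the sets $\Char_+(L)$ and $\Char_-(L)$; hence $c_1(\s_0)$ and $c_1(\s_1)$ lie in opposite classes. The version of Lemma~\ref{l:charcongruence} appropriate here---applicable to a bimodular lattice of arbitrary signature, obtained by replacing $\rk U$ with $\sigma(U)$ in the proof (using the van der Blij congruence $\xi_U^2 \equiv \sigma(U) \pmod 8$ for unimodular $U$)---then yields $c_1(\s_i)^2 \equiv \sigma(X) \pm 1 \pmod 8$, with opposite signs for $i=0,1$.

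Finally I would invoke the standard identity $d(Y,\t) \equiv \rho(Y,\t) \equiv (c_1(\s)^2 - \sigma(X))/4 \pmod 2$, valid for any bounding spin$^c$ $4$-manifold $(X,\s)$. Combined with the previous paragraph this gives $d(Y,\t_i) \equiv \pm \tfrac{1}{4} \pmod 2$ with opposite signs; labelling the two spin$^c$ structures as $\t_\pm$ so that $d(Y,\t_\pm) \equiv \pm \tfrac{1}{4} \pmod 2$ completes the proof. The main point to verify carefully is the signature-valued refinement of Lemma~\ref{l:charcongruence} (needed because $X$ is not assumed definite, and in light of the main theorem cannot always be chosen to be); everything else reduces to bookkeeping with the Poincar\'e--Lefschetz identifications afforded by Lemma~\ref{extension}.
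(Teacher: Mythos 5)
Your proposal is correct, but it takes a genuinely different route from the paper. The paper produces an \emph{explicit} cobordism $W$ from an integral homology sphere $Z$ to $Y$---the trace of a $-2$-framed $2$-handle attachment obtained by factoring a slope-$2/q$ surgery as a negative continued fraction---and then computes the two rho-invariants directly via the cobordism formula $\rho(Y,\t_\pm)-\rho(Z)\equiv \frac{c_1(\s_\pm)^2-\sigma(W)}{4}\pmod 2$, using the fact that integral homology spheres have vanishing $\rho$. Your approach instead takes \emph{any} compact oriented $4$-manifold $X$ filling $Y$, feeds the (possibly indefinite) determinant-$2$ lattice $L=H_2(X)/\mathrm{tors}$ into the machinery of Section~\ref{lattices}, and reads off the two residues from a signature-valued generalization of Lemma~\ref{l:charcongruence}. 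Your approach buys uniformity (no case-by-case surgery diagram, no intermediate $Z_0$) at the cost of extending two pieces of the lattice discussion beyond the stated positive-definite hypothesis: (i) Lemma~\ref{l:charcongruence}, where, as you note, one replaces $\rk U$ by $\sigma(U)$ and checks that the unimodular overlattice of $L\oplus A_1$ still exists when $L$ is indefinite (it does: the discriminant bilinear form on $L'/L\cong\Z/2\Z$ is always $1/2\bmod 1$, so the diagonal is isotropic); and (ii) the observation preceding Lemma~\ref{l:extend} that translation by $2L'\setminus 2L$ swaps the two residue classes, which also carries over to the indefinite case. Both verifications are straightforward but are not in the paper, so you are right to flag them as the point requiring care. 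The one place you are too terse is the claim that $\alpha$ reduces to the nontrivial class of $L'/L$: a priori a torsion class of $H^2(X)$ could restrict nontrivially to $H^2(Y)$, which would break the argument. This is ruled out by the proof of Lemma~\ref{extension} (the square-free hypothesis forces the torsion map $\tau_2\colon A\to B$ to be an isomorphism, so $\mathrm{tors}\,H^2(X)$ lies in the image of $H^2(X,Y)$ and restricts trivially to $Y$), or more cheaply by choosing $X$ simply connected---but you should say which. Note also that the paper needs the cobordism-from-$Z$ viewpoint again later anyway (it is baked into the definition of $\rho$ via simply connected fillings), so the two approaches are not as far apart as they first appear; yours just repackages the same mod-$8$ arithmetic purely on the lattice side.
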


We start with a preliminary lemma.

\begin{lemma}\label{extension}
Let $W$ be a cobordism from an integral homology sphere $Z$ to a homology $\RP^3$, $Y$.
Then both spin$^c$ structures on $Y$ extend to $W$.
\end{lemma}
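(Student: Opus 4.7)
The plan is to reduce this statement to the previous extension lemma (the one for $4$-manifolds with a single boundary component) by capping off the integral homology sphere end of $W$. Since every closed oriented $3$-manifold bounds a compact oriented $4$-manifold (the third oriented bordism group vanishes), I would choose any compact oriented $4$-manifold $V$ with $\partial V = Z$. Gluing along $Z$ then produces
\[
W' := V \cup_Z W,
\]
a compact oriented $4$-manifold with $\partial W' = Y$.

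Next, since $|H_1(Y)| = 2$ is square-free, the hypotheses of the preceding lemma hold for $W'$. Applying it, every spin$^c$ structure on $Y$ extends to a spin$^c$ structure on $W'$. Now $W$ sits inside $W'$ as a codimension-zero submanifold, so restricting any such extension from $W'$ to $W$ produces a spin$^c$ structure on $W$ whose restriction to $Y \subset \partial W$ is the prescribed one. This gives both extensions at once.

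I do not expect any genuine obstacle here: the capping-off step is essentially free because $Z$ is an integral homology sphere, so $H^2(Z) = 0$ and its unique spin$^c$ structure extends across any $V$ we choose without any compatibility condition to check. The only point worth verifying explicitly is the existence of some $V$ bounding $Z$, which is standard.
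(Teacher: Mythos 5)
Your proof is correct, but it follows a different (if closely related) reduction than the paper's. The paper carves out a neighborhood of a properly embedded arc from $Z$ to $Y$ in $W$, producing a $4$-manifold $X$ homotopy equivalent to $W$ with connected boundary $Y\#(-Z)$; since $Z$ is an integral homology sphere, $H_1(Y\#(-Z))\cong\Z/2\Z$ and the first extension lemma applies directly, and because the carving does not change $H^2$ the conclusion for $X$ transfers to $W$. You instead cap off the $Z$ end with an arbitrary filling $V$ (using $\Omega_3^{\mathrm{SO}}=0$) and apply the first lemma to $W'=V\cup_Z W$, then restrict to $W$. Both reductions are legitimate: your version imports an auxiliary choice of $V$ and changes the homotopy type of the ambient $4$-manifold, whereas the paper's carving is intrinsic to $W$ and homotopy-preserving; the trade-off is that the capping argument needs no discussion of why ``the statement is equivalent'' for the modified manifold, since you literally restrict a spin$^c$ structure from $W'$ to the submanifold $W$, so it reads a bit more transparently. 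Either way the essential content is the same: both arguments funnel the cobordism case into the single-boundary-component lemma.
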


\begin{proof}
Carve an open neighborhood of a path from $Z$ to $Y$ into $W$, to obtain a 4-manifold $X$ with boundary $Y\#(-Z)$.
The statement is equivalent to the fact that both spin$^c$ structures on $\partial X = Y\#(-Z)$ extend to $X$, which follows from Lemma~\ref{extension}.
\end{proof}

Fix a spin$^c$ structure $\t$ on $Y$ and a simply-connected 4-manifold $X$ with spin$^c$ structure $\s$, such that $\partial X= Y$ and $\s |_Y = \t$.
Recall now that the d-invariant $d(Y,\t)$ reduces modulo 2 to the rho-invariant $\rho(Y,\t) \in \Q/2\Z$. The rho-invariant is defined as $\rho(Y,\t) \equiv \frac{c_1(\s)^2-\sigma(X)}{4} \pmod 2$
; it follows from the definition that if $(W,\s)$ is a spin$^c$ cobordism from $(Y,\t)$ to $(Y', \t')$, then $\rho(Y',\t') - \rho(Y,\t) \equiv \frac{c_1(\s)^2-\sigma(W)}{4} \pmod 2$.
In particular, integral homology spheres $Z$ have $\rho(Z,\t) = 0$, since they bound spin manifolds (which have signature divisible by 8, by the van der Blij Lemma~\cite[Section~II.5]{HusemollerMilnor} and which have a spin$^c$ structure with trivial first Chern class).

\begin{proof}[Proof of Proposition~\ref{labelling}]
Since $\rho(Y,\t)$ lifts to $d(Y,\t)$, the statement clearly reduces to showing that $\rho(Y,\t_\pm) \equiv \pm\frac14 \pmod 2$.
This is what we set out to prove, by finding a suitable cobordism from $Y$ to an integral homology sphere.

Pick a knot $K$ in $Y$ such that $[K] \neq 0 \in H_1(Y)$.
Then there exists a slope $\gamma$ such that the result of Dehn surgery along $K$ with slope $\gamma$ is an integral homology sphere $Z_0 := Y_\gamma(K)$.
Let $K_0 \subset Z_0$ denote the dual knot.
Since $H_1(Y)\cong \Z/2\Z$ and $Z_0$ is an integer homology sphere, the surgery on $K_0$ that returns $Y$ must have slope $2/q$ for some odd integer $q$.

We can write $2/q$ as a (negative) continued fraction $2/q = [0,-\frac{q+1}2,-2]$, so that $Y$ can be represented by the surgery diagram in Figure~\ref{f:Z0toZ}.
It is easy to see that the 3-manifold obtained by doing surgery on the $0$- and $-\frac{q+1}2$-framed components is again an integral homology sphere, which we denote by $Z$, and that the cobordism $W$ from $Z$ to $Y$ given by the $-2$-framed 2-handle is negative definite.
Moreover, since $W$ is obtained by attaching a single 2-handle to an integral homology sphere, $H_1(W) = H_3(W) = 0$;
in particular, both spin$^c$ structures on $Y$ extend to $W$.

Since $W$ is the trace of a 2-handle attachment along a knot in an integral homology sphere with framing $-2$, the two spin$^c$ structures $\s_+$ and $\s_-$, with Chern classes $0$ and $2\gamma \in H^2(W;\Z) \equiv \Z\cdot \gamma$ respectively, have $c_1(\s_+)^2 = 0$, $c_1(\s_-^2) = -2$.
By the cobordism formula mentioned above, letting $\t_\pm$ be the restriction of $\s_\pm$ to $Y$, we get:
\begin{align*}
\rho(Y,\t_+) = \frac{c_1(\s_+)^2 - \sigma(W)}4 + \rho(Z,\t) \equiv \frac14 \pmod 2,\\
\rho(Y,\t_-) = \frac{c_1(\s_-)^2 - \sigma(W)}4 + \rho(Z,\t) \equiv -\frac14 \pmod 2,
\end{align*}
thus concluding the proof.
%
%
\end{proof}

\begin{figure}
\labellist
\pinlabel $K_0$ at 36 133
\pinlabel $=$ at 108 133
\pinlabel $=$ at 252 133
\pinlabel $\frac2q$ at 45 30
\pinlabel $K_0$ at 179 133
\pinlabel $0$ at 190 25
\pinlabel $-\frac{q}2$ at 131 75
\pinlabel $K_0$ at 324 133
\pinlabel $\langle 0\rangle$ at 338 25
\pinlabel $\langle -\frac{q+1}2\rangle$ at 263 75
\pinlabel $-2$ at 418 75
\endlabellist
\centering
\includegraphics[scale=.66]{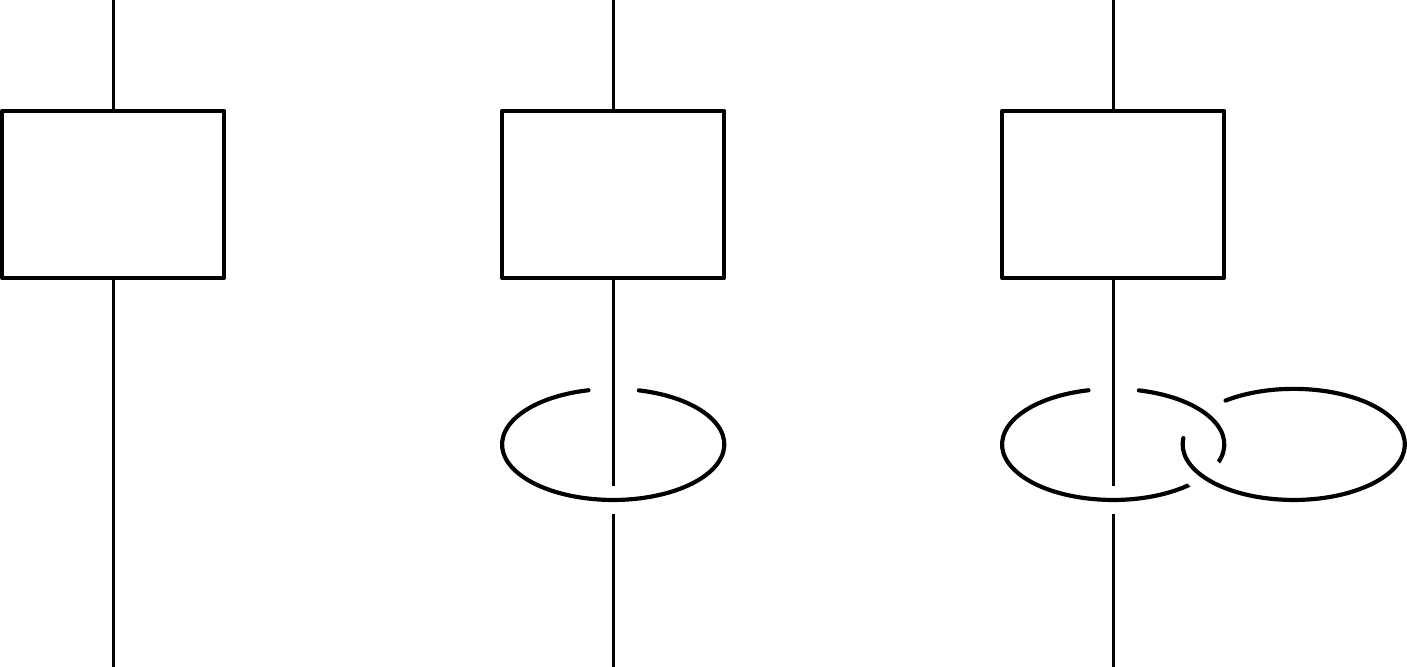}
\caption{Going from $Z_0$ to $Z$.
Recall that the knot $K_0$ lives in $Z_0$.
The right-most picture represents the cobordism from $Z$ to $Y$.
Here we used the braced framing notation: namely, the surgery diagram comprising the components with braced framings describes $Z$, i.e. the lower boundary component of the cobordism, and the non-braced ones represent actual handle attachments for the cobordism.}\label{f:Z0toZ}
\end{figure}
%
%

Proposition~\ref{labelling} justifies the following definition.

\begin{definition}
For $Y$ a homology $\RP^3$, we set $d_{\pm 1/4}(Y) = d(Y,\t_\pm)$.
\end{definition}

Note that the labelling is chosen so that $d_{\pm 1/4}(Y) \equiv \pm \frac14 \pmod 2$;
observe also that since $d(Y,\t) = -d(-Y,\t)$, we have that $d_{\pm1/4}(-Y) = -d_{\mp 1/4}(Y)$.

Now suppose that $Y$ bounds a \emph{positive definite} 4-manifold $W$.
In this context we have the following inequality.

\begin{theorem}[\cite{OSz}]\label{t:negativedefiniteinequality}
For each spin$^c$ structure $\s$ on $W$ with $\s|_Y = \t$, we have
\[
\frac{c_1(\s)^2 -b_2(W)}4 \geq d(Y,\t).
\]
Moreover, the two sides of the inequality are congruent modulo $2$.
\end{theorem}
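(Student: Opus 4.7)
The plan is to reduce to the classical negative-definite Ozsv\'ath--Szab\'o inequality by orientation reversal, and then to invoke the Floer-theoretic machinery that establishes that inequality. Reversing orientation, $\overline W$ is negative definite with $\partial \overline W = -Y$, and the conjugate spin$^c$ structure $\overline \s$ (restricting to $\overline\t$) satisfies $c_1(\overline\s)^2 = -c_1(\s)^2$ (the cup-product pairing flips sign), $b_2(\overline W) = b_2(W)$, and $d(-Y, \overline\t) = -d(Y,\t)$. Applying the standard inequality of~\cite{OSz} to $(\overline W, \overline \s)$ bounding $-Y$, namely
\[
c_1(\overline\s)^2 + b_2(\overline W) \le 4\, d(-Y,\overline\t),
\]
and substituting the three identities above rearranges directly to the claim $\frac{c_1(\s)^2 - b_2(W)}{4}\ge d(Y,\t)$.

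To justify the classical inequality itself, I would remove an open ball from $\overline W$ to obtain a negative-definite cobordism $X \co S^3 \to -Y$, still having the same intersection form. The grading shift of the induced map $F^+_{X,\overline\s}\co \HFp(S^3)\to \HFp(-Y,\overline\t)$ is $\Delta = (c_1(\overline\s)^2 - 2\chi(X) - 3\sigma(X))/4$; for our $X$ (negative definite with $b_1=0$) one computes $\chi(X) = b_2(\overline W)$ and $\sigma(X) = -b_2(\overline W)$, so $\Delta = (c_1(\overline\s)^2 + b_2(\overline W))/4$. The key input is the theorem of~\cite{OSz} that for a negative-semidefinite cobordism between rational homology spheres, the induced map on $\HFoo$ is an isomorphism of graded $\F[U,U^{-1}]$-modules. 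Identifying $\HFoo(S^3)$ and $\HFoo(-Y,\overline\t)$ with $\F[U,U^{-1}]$ with generators at grading $0$ and $d(-Y,\overline\t)$ respectively, the iso must send $1 \mapsto U^k$ for some $k \in \Z$ with $k = (d(-Y,\overline\t) - \Delta)/2$. The geometric content, encoded in a careful diagram chase using the commutative square relating $\pi$, $F^\infty$, and $F^+$ and the Heegaard Floer long exact sequence, is that $k\ge 0$, equivalently $\Delta \le d(-Y,\overline\t)$.

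For the mod-$2$ congruence, recall that $d(Y,\t)\equiv \rho(Y,\t) \pmod 2$, and that by the definition of the rho-invariant recalled before Proposition~\ref{labelling} one has $\rho(Y,\t) \equiv (c_1(\s)^2 - \sigma(W))/4 \pmod 2$ for any bounding $(W,\s)$. Since $W$ is positive definite, $\sigma(W) = b_2(W)$, so $(c_1(\s)^2 - b_2(W))/4 \equiv \rho(Y,\t) \equiv d(Y,\t) \pmod 2$. The hard part of this whole program is not the orientation-reversal book-keeping nor the mod-$2$ step but the verification of the sign $k\ge 0$ above; this requires the full $\HFoo$-isomorphism theorem together with a precise understanding of how $F^+$ interacts with the $\F[U]$-module structure on the tower $\mathcal T^+\subset \HFp$, and constitutes the technical heart of the argument of~\cite{OSz}.
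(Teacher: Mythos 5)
The paper does not prove this statement: it is quoted directly from Ozsv\'ath--Szab\'o (the negative-definite inequality of~\cite{OSz}, Theorem 9.6, restated here for a positive definite filling by reversing orientation), together with the standard mod-$2$ congruence coming from the $\rho$-invariant. There is therefore no in-paper proof for your attempt to be compared against; you are reconstructing the argument from~\cite{OSz}.

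That said, your reconstruction is essentially sound. The orientation-reversal bookkeeping is correct, although invoking conjugation is superfluous: the sign flip $c_1(\s)^2\mapsto -c_1(\s)^2$ comes from the intersection form changing sign on $\overline W$, and conjugation (which sends $c_1\mapsto -c_1$) does not affect $c_1^2$ at all. Your grading-shift computation $\Delta = (c_1(\overline\s)^2 + b_2(\overline W))/4$ tacitly uses $b_1(\overline W)=0$; in general one first surgers out $H_1(W)/\mathrm{Tor}$, which changes neither $b_2$ nor the intersection lattice, and then proceeds. The ``$1\mapsto U^k$'' phrasing is slightly loose since $\HFoo(-Y,\overline\t)$ has no canonical generator, but your normalization (generator at grading $d(-Y,\overline\t)$) fixes that, and you correctly identify $k\ge 0$ as the crux. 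The cleanest way to see $k\ge 0$ is the one implicit in~\cite{OSz}: pick $y\in\HFoo(-Y,\overline\t)$ at grading $d(-Y,\overline\t)$ with $\pi(y)\ne 0$; then $\pi\bigl((F^\infty)^{-1}(y)\bigr)\ne 0$ because it maps to $\pi(y)$ under $F^+$ via the commuting square, and since the image of $\pi$ in $\HFp(S^3)$ lies in nonnegative gradings this forces $d(-Y,\overline\t)-\Delta\ge 0$. Your mod-$2$ argument via $\rho(Y,\t)\equiv (c_1(\s)^2-\sigma(W))/4$ and $\sigma(W)=b_2(W)$ is exactly right and is the same observation the paper makes just before Proposition~\ref{labelling}.
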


%



We are now ready to give a topological translation of Theorem~\ref{p:2-elkies}.
\begin{proposition}\label{ob}
Let $Y$ be a homology $\RP^3$.
If $Y$ bounds a positive definite $4$-manifold, then $d_{1/4}(Y) + d_{-1/4}(Y) \le 0$.
Moreover, if equality is attained, then $d_{\pm 1/4}(Y) = \pm \frac{1}{4}$.
\end{proposition}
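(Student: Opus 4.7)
The plan is to apply Theorem~\ref{p:2-elkies} to the intersection form of the hypothetical positive definite filling $W$, combined with the Ozsv\'ath--Szab\'o inequality (Theorem~\ref{t:negativedefiniteinequality}) to translate the lattice bound into one on correction terms. Since $|H_1(Y)| = 2$ is square-free, the algebraic topology lemma at the start of this section gives $|{\det Q_W}| = 2$ and that both spin$^c$ structures on $Y$ extend to $W$; hence the intersection lattice $L := H^2(W;\Z)/\text{torsion}$ equipped with $Q_W$ is a positive definite bimodular lattice of rank $n := b_2(W)$.

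Next, I match the two spin$^c$ structures $\t_\pm$ on $Y$ with the two cosets $\Char_\pm(L)$ of $2L$ inside $\Char(L)$. As $\s$ ranges over spin$^c$ structures on $W$ extending a fixed $\t$, the class $c_1(\s)$ modulo torsion sweeps out exactly one coset of $2L$ inside $\Char(L)$, and by Lemma~\ref{l:charcongruence} these two cosets are precisely $\Char_\pm(L)$, distinguished by the mod-$8$ congruence of $\xi^2 - n$. Since Theorem~\ref{t:negativedefiniteinequality} asserts $(c_1(\s)^2-n)/4 \geq d(Y,\t)$ with the two sides congruent modulo $2$, combined with Proposition~\ref{labelling} this forces extensions of $\t_\pm$ to correspond to covectors in $\Char_\pm(L)$ respectively.

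Minimizing the OSz inequality over all extensions of $\t_\pm$ then gives $d_{\pm 1/4}(Y) \leq d_\pm(L)$. Summing and applying Theorem~\ref{p:2-elkies} yields
\[
d_{1/4}(Y) + d_{-1/4}(Y) \;\leq\; d_+(L) + d_-(L) \;\leq\; 0.
\]
If equality holds, both inequalities must be sharp: Theorem~\ref{p:2-elkies} then gives $d_\pm(L) = \pm\tfrac14$, and since each summand satisfies $d_{\pm 1/4}(Y) \leq d_\pm(L)$ while the sums agree, each individual inequality is tight, so $d_{\pm 1/4}(Y) = \pm\tfrac14$.

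The main obstacle is the bookkeeping in the matching step: verifying that the two cosets $\Char_\pm(L)$ correspond precisely to extensions of $\t_\pm$ (and not, say, both to extensions of the same spin$^c$ structure on $Y$) requires carefully aligning the mod-$2$ congruence in the OSz inequality with the labelling convention of Proposition~\ref{labelling}.
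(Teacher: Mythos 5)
Your proof is correct and takes essentially the same approach as the paper: extract the bimodular lattice from the positive definite filling via Lemma~\ref{extension}, use the mod-$2$ congruence in Theorem~\ref{t:negativedefiniteinequality} together with Proposition~\ref{labelling} to identify the cosets $\Char_\pm(L)$ with Chern classes of extensions of $\t_\pm$, minimize over each coset, and apply Theorem~\ref{p:2-elkies}. (One minor notational slip: $L$ should be $H_2(W;\Z)/\mathrm{Tor}$ with the intersection pairing, so that $\Char(L)\subset L'\cong H^2(W;\Z)/\mathrm{Tor}$ is where the first Chern classes live.)
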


\begin{proof}
Suppose that $Y$ bounds a positive definite $4$-manifold $W$, and let $L$ be the lattice $(H_2(W;\Z)/{\rm Tor}, Q_W)$.
By Lemma~\ref{extension}, $L$ is a positive definite lattice of determinant $2$, and the first Chern class gives a surjection $c_1 \colon \Spinc(W) \to \Char(L)$. Call $n = b_2(W) = \rk L$.

By the last statement in Theorem~\ref{t:negativedefiniteinequality}, using the labelling of Proposition~\ref{labelling}, we see that $\s \in \Spinc(W)$ restricts to $\t_\pm$ if and only if $c_1(\s) \in \Char_\pm(L)$.
Let $\xi_+ \in \Char_+(L)$ and $\xi_- \in \Char_-(L)$ be characteristic covectors with minimal square; note that there exist spin$^c$ structures $\s_\pm$ on $W$ such that $c_1(\s_\pm) = \xi_\pm$, and that $\s_\pm$ restricts to $\t_\pm$.

Then using Theorem~\ref{t:negativedefiniteinequality} and Theorem~\ref{p:2-elkies} we get
\[
d_{1/4}(Y) + d_{-1/4}(Y) \le \frac{\xi_+^2 -n}4 + \frac{\xi_-^2 -n}4 = d_+(L) + d_-(L) \le 0,
\]
proving the first part of the theorem.
Furthermore, if $d_{1/4}(Y) + d_{-1/4}(Y) = 0$, then the above inequality forces $d_+(L) + d_-(L) = 0$, and so by Theorem~\ref{p:2-elkies} we get that $d_\pm(L) = \pm \frac14$.
This, in turn, together with Theorem~\ref{t:negativedefiniteinequality}, forces $d_{\pm1/4}(Y) =  \pm \frac14$.
%
%
%
\end{proof}

\end{section}
\begin{section}{The example}\label{example}

Recall that we defined $\overline Y$ as the Seifert fibered space $Y(2;\frac{15}{13},\frac{17}{3},\frac{23}{22})$ and $N = 3P \# \overline Y$, where $P$ is the Poincar\'e homology sphere, oriented as the boundary of the negative $E_8$-plumbing;
equivalently, $P$ is the Brieskorn sphere $\Sigma(2,3,5)$.

We start by computing the correction terms of $\overline Y$.

\begin{proposition}
$d_{-1/4}(\overline Y) = -\frac{17}{4}$ and $d_{1/4}(\overline{Y}) = -\frac{31}{4}$.
\end{proposition}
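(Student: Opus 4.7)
The plan is to present $\overline Y$ (with an appropriate orientation) as the boundary of a plumbing and compute the correction terms via Ozsv\'ath--Szab\'o's algorithm for plumbed rational homology spheres. Expanding each Seifert coefficient as a negative continued fraction gives $\frac{15}{13}=[2,2,2,2,2,2,3]$, $\frac{17}{3}=[6,3]$, and $\frac{23}{22}=[2,\ldots,2]$ with $22$ entries; together with the base integer, these produce a star-shaped plumbing graph $\Gamma$ on $31$ vertices whose boundary is (up to orientation) $\overline Y$. A direct determinant computation
\[
|H_1(\overline Y)|=\left|2\cdot 15\cdot 17\cdot 23-13\cdot 17\cdot 23-15\cdot 3\cdot 23-15\cdot 17\cdot 22\right|=2
\]
confirms that the associated plumbing lattice $L_\Gamma$ is bimodular, and in particular has determinant~$2$, as required by Lemma~\ref{extension}.

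Since all but at most one vertex of $\Gamma$ has weight equal to minus its valence, $\Gamma$ is almost-rational in the sense of N\'emethi, and the Ozsv\'ath--Szab\'o plumbing algorithm applies to reduce the computation of $d(\overline Y,\t)$ for each spin$^c$ structure $\t$ to a finite maximization of $(\xi^2+\rk L_\Gamma)/4$ (with the appropriate sign convention) over \emph{short} characteristic covectors in $L_\Gamma$ whose restriction to the boundary corresponds to $\t$. The push-down procedure reduces this further to a combinatorial search along the three arms: because almost every weight equals $\pm 2$, the short vectors on each arm are determined by a small number of free parameters pinned down by the congruence conditions at the central vertex and at the one vertex of higher weight, so the whole search is essentially a short linear-algebra computation arm by arm, coupled only through the central vertex.

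To identify which of the two extremal values is $d_{1/4}(\overline Y)$ and which is $d_{-1/4}(\overline Y)$, I would compute $\xi^2-\rk L_\Gamma\pmod 8$ for each of the two maximizing covectors and apply Lemma~\ref{l:charcongruence} together with Proposition~\ref{labelling}: the covector with $\xi^2-\rk L_\Gamma\equiv \pm 1\pmod 8$ corresponds to $\t_\pm$, and hence to $d_{\pm 1/4}(\overline Y)$. Carrying this through must produce $-\frac{31}{4}$ and $-\frac{17}{4}$, matched to the two spin$^c$ structures as stated. The principal obstacle is bookkeeping: with $31$ vertices, and in particular an arm of length $22$, the push-down search is sizable. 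The chain structure of the arms keeps the enumeration very structured rather than open-ended, so the computation can be carried out by hand or checked with a short script.
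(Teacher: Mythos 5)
Your approach is a valid alternative, and in fact one the paper itself mentions in the remark immediately following the proposition: either compute minimal squares in the canonical negative plumbing lattice or run the full Ozsv\'ath--Szab\'o algorithm from~\cite{OSz3}. The paper's own proof instead invokes N\'emethi's formula (the Dedekind--Rademacher sum term plus the minimum of the $\tau$-function) via Karakurt's software, so both routes ultimately delegate the arithmetic to a computation rather than presenting it by hand; yours is no less rigorous in that respect. N\'emethi's formula has the practical advantage that the Seifert data feed directly into a closed-form sum plus a one-dimensional minimization, whereas the OSz3 push-down search on a $32$-vertex star-shaped graph (especially the length-$22$ arm) is considerably heavier if done by hand.

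Two points to fix. First, a bookkeeping slip: the graph has one central vertex plus arms of lengths $7$, $2$, and $22$ (from $\frac{15}{13}=[2,2,2,2,2,2,3]$, $\frac{17}{3}=[6,3]$, $\frac{23}{22}=[2,\dots,2]$), giving $32$ vertices, not $31$; so $\rk L_\Gamma = 32$. Second, you should be explicit about orientations. The Euler number of $\overline Y$ is positive (you effectively computed it: $e = 2/5865$), so the star-shaped plumbing with the positive framings from Figure~\ref{f:seifert} is \emph{positive} definite and bounds $\overline Y$, while it is $-\overline Y$ that bounds the negative definite plumbing to which~\cite{OSz3} applies. Thus one computes $d(-\overline Y,\t)$ by maximizing $(\xi^2 + \rk)/4$ over $\Char$ in the negative definite lattice, then negates and swaps labels using $d_{\pm 1/4}(-\overline Y) = -d_{\mp 1/4}(\overline Y)$. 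Your mod-$8$ criterion for matching covectors to spin$^c$ structures via Lemma~\ref{l:charcongruence} and Proposition~\ref{labelling} is the right idea, but the sign in $\xi^2 \mp \rk L_\Gamma$ must be kept consistent with which orientation (and hence which definiteness) you are working in.
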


\begin{proof}
Since $-\overline{Y}$ is the boundary of a negative definite plumbing with a single bad vertex, we can compute these correction terms using \c{C}a\u{g}r{\i} Karakurt's implementation~\cite{Cagricode} of N\'emethi's formula~\cite[Section~11.13]{Nemethi}, which, in turn, is a generalization of Ozsv\'ath and Szab\'o's algorithm from~\cite{OSz3}.
\end{proof}

\begin{remark}
N\'emethi computes of the $d$-invariant of a Seifert fibered space as a sum of two terms;
the first summand is expressed in terms of Dedekind--Rademacher sums associated to the Seifert parameters~\cite[Section 11.9]{Nemethi}, while the second depends on the minimum of a certain function $\tau: \Z_{\ge 0} \to \Z$.
The function is eventually increasing, and the minimum is contained in a bounded interval $[0,N]$, where $N$ can be chosen to be the product of the multiplicities of the fibers.

Furthermore, in principle the computation of the correction terms of $\overline Y$ could be done in other ways: either by computing the minimal squares in the lattice associated to the canonical negative plumbing of $-\overline Y$~\cite[Corollary~1.5]{OSz3} or by following the entire algorithm in~\cite{OSz3}.
\end{remark}

We are now ready to prove our main result; more precisely, we will prove that $N$ does not bound a definite 4-manifold.

\begin{proof}[Proof of Theorem~\ref{main}]
By additivity of correction terms, and since $d(P,\t) = 2$ for the unique spin$^c$ structure $\t$ on $P$, we know that $d_{\pm 1/4}(N) = 3d(P,\t) + d_{\pm 1/4}(\overline{Y})$. By the previous proposition, we get $d_{\pm 1/4}(N) = \mp \frac74$.

Proposition~\ref{ob} now implies that $N$ cannot bound a positive definite 4-manifold.
Reversing orientation and again applying Proposition~\ref{ob} shows that $N$ cannot bound a negative definite 4-manifold either.
\end{proof}

We conclude with two observations about $\overline Y$ and spineless 4-manifolds.

\begin{proposition}
Let $\Sigma$ be an integral homology sphere. The $3$-manifold $\overline Y \# \Sigma$ is not integral homology cobordant to a $3$-manifold obtained as Dehn surgery along a knot in $S^3$.
\end{proposition}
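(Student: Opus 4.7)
The plan is to argue by contradiction. Suppose $\overline Y \# \Sigma$ is integer homology cobordant to some $S^3_{p/q}(K)$. Since $H_1(\overline Y \# \Sigma) \cong \Z/2\Z$ and $\gcd(p, q) = 1$, we must have $|p| = 2$ and $q$ odd. After possibly simultaneously reversing orientations and passing to the mirror knot $mK$---an operation preserving the quantity $\delta(Y) \defeq d_{1/4}(Y) - d_{-1/4}(Y)$, since $d(-Y,\t) = -d(Y,\t)$ and the $\pm 1/4$ labels swap accordingly---we may assume $p = 2$ and $q > 0$.

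The quantity $\delta$ is invariant under integer homology cobordism, since such cobordisms preserve $d$-invariants spin$^c$ structure by spin$^c$ structure, and the labels $\t_\pm$ are determined by the residue of $d$ modulo $2$. Using additivity of the correction terms under connected sum and the fact that $d(\Sigma) \in 2\Z$ for any integer homology sphere, one computes
\[
\delta(\overline Y \# \Sigma) = d_{1/4}(\overline Y) - d_{-1/4}(\overline Y) = -\frac{31}{4} + \frac{17}{4} = -\frac{7}{2},
\]
independently of $\Sigma$. It therefore suffices to show $\delta(S^3_{2/q}(K)) \neq -\frac{7}{2}$ for every knot $K$ and every positive odd integer $q$.

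To compute $\delta(S^3_{2/q}(K))$, I would invoke the Ni--Wu rational surgery formula, which expresses $d(S^3_{2/q}(K), \t_i)$ as $d(L(2, q), \t_i)$ minus $2 \max(V_s(K), V_{s'}(mK))$ for specific indices $s, s'$ depending on $i$ and $q$, where $V_t(J) \ge 0$ are the Ozsv\'ath--Szab\'o invariants of a knot $J$. Since $L(2, q) \cong \RP^3$ for every odd $q$, its correction terms are $\pm \frac{1}{4}$. A direct inspection of the indices reveals that when $q \ge 3$ both spin$^c$ structures on $S^3_{2/q}(K)$ receive the same correction $2 \max(V_0(K), V_1(mK))$, so $\delta(S^3_{2/q}(K)) = \frac{1}{2}$, contradicting $-\frac{7}{2}$.

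The main obstacle is the remaining case $q = 1$ of integer surgery, where the two correction terms differ: $\max(V_0(K), V_2(mK))$ for $\t_+$ and $\max(V_1(K), V_1(mK))$ for $\t_-$. The equation $\delta(S^3_2(K)) = -\frac{7}{2}$ then becomes
\[
\max(V_0(K), V_2(mK)) - \max(V_1(K), V_1(mK)) = 2.
\]
I would rule this out by a four-case analysis on which argument attains each of the two maxima; in every case the required jump by $2$ contradicts standard properties of the $V$-invariants, in particular the step-size bound $V_s(J) - V_{s+1}(J) \in \{0, 1\}$ for every knot $J \subset S^3$, applied to $J = K$ or $J = mK$.
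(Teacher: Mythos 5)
Your overall strategy is the same as the paper's: define $\delta(Y) = d_{1/4}(Y) - d_{-1/4}(Y)$, observe that it is a homology-cobordism invariant (and invariant under orientation reversal), compute $\delta(\overline Y \# \Sigma) = -\tfrac72$, and invoke the Ni--Wu surgery formula to show that $\delta(S^3_{2/q}(K))$ cannot equal $-\tfrac72$. The paper's proof cites~\cite{NW} to get the uniform bound $d_{1/4}(S^3_{2/q}(K)) - d_{-1/4}(S^3_{2/q}(K)) \ge \tfrac12$ directly for all odd $q>0$, whereas you split into $q \ge 3$ and $q=1$; this is a cosmetic difference and your cleaner observation that $\delta$ is preserved by simultaneous orientation reversal and mirroring is a nice way to phrase the reduction to $p=2, q>0$.

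There is, however, a genuine imprecision in your treatment of $q=1$ that could become a gap. The Ni--Wu formula for a knot $K \subset S^3$ is expressed in terms of $V_s(K)$ and $H_s(K)$, and for knots in $S^3$ one has $H_s(K) = V_{-s}(K)$ (this is~\cite[Lemma~2.4]{NW}); the mirror $mK$ does not enter. Concretely, for $S^3_2(K)$ the two correction terms involve $\max\{V_0(K), V_2(K)\} = V_0(K)$ and $V_1(K)$, so the equation $\delta = -\tfrac72$ reduces to $V_0(K) - V_1(K) = 2$, which is ruled out by the step bound $V_s - V_{s+1} \in \{0,1\}$ applied to the \emph{single} knot $K$. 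Your version involves $V_0(K), V_2(mK), V_1(K), V_1(mK)$ simultaneously, and in the mixed cases of your "four-case analysis'' (say $\max(V_0(K), V_2(mK)) = V_2(mK)$ while $\max(V_1(K), V_1(mK)) = V_1(K)$) the quantities being compared live on different knots, so the step bound gives you nothing. As written, the four-case analysis does not close. Replacing $V_s(mK)$ by $V_{-s}(K)$ (which then equals $V_0(K)$ since $s \le 0$ there) fixes the formula and collapses the cases, recovering the paper's argument.
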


\begin{proof}
If we let $Y = S^3_{2/q}(K)$ with $q > 0$; then, by~\cite[Proposition~1.4 and Lemma~2.4]{NW}:
\begin{align*}
d_{\pm 1/4}(Y) \in \left\{-2V_0(K) + \frac14, -2V_1(K) + \frac14\right\} \Longrightarrow d_{1/4}(Y) &\ge -2V_0(K) + \frac14,\\
 d_{-1/4}(Y) &= -2V_0(K) - \frac14,
\end{align*}
so that in particular $d_{1/4}(Y) - d_{-1/4}(Y) \ge \frac12$. However
\[ d_{1/4}(\overline Y \# \Sigma) - d_{-1/4}(\overline Y \# \Sigma) = d_{1/4}(-(\overline Y \# \Sigma)) - d_{-1/4}(-(\overline Y \# \Sigma)) = -\frac72,\]
so $\pm(\overline Y\#\Sigma)$ cannot be integrally homology cobordant to a positive surgery along a knot in $S^3$.
\end{proof}

The following remark was suggested to the authors by Adam Levine.

\begin{remark}
Note that the previous proposition implies that, for any integral homology sphere $\Sigma$, $\overline Y \# \Sigma$ cannot bound a simply-connected spineless 4-manifold, i.e. a 4-manifold $W$ that is homotopy equivalent to $S^2$ but such that the generator of $H_2(W)$ is not represented by a PL-sphere.

Closely following Levine and Lidman's approach~\cite{LevineLidman}, we produce a homotopy $S^2$ whose boundary is $\overline Y \# \Sigma$ for some homology sphere $\Sigma$, which is necessarily going to be spineless. We sketch the construction, which is very similar to~\cite{LevineLidman}.

The key observation is that there is an integral homology sphere $-\Sigma$ such that $\overline Y$ is obtained as integral surgery along a knot in $-\Sigma$. For example, we can choose $\Sigma$ to be the Brieskorn sphere $\Sigma(15,17,181)$. Indeed, the negative plumbing graph of $\Sigma(15,17,181)$ is obtained by adding a single vertex to the negative plumbing graph of $-\overline Y$, which exhibits $\overline Y$ as surgery along a singular fiber of $-\Sigma(15,17,181)$.

By~\cite[Lemma~3.2 and Proposition~3.1]{LevineLidman}, the 4-manifold obtained from the trace of this surgery and carving a path in $\Sigma \times I$ is a homotopy $S^2$ whose boundary is $\overline Y \# \Sigma$.
\end{remark}

\end{section}

\bibliographystyle{abbr}
\bibliography{lattices}

\end{document}